\documentclass{amsart}
\usepackage[english]{babel}
\usepackage[utf8]{inputenc}
\usepackage[autostyle]{csquotes}

\usepackage{latexsym}
\usepackage{amsfonts}
\usepackage{euscript}
\usepackage{times}

\usepackage{dsfont}
\usepackage{amsmath}
\usepackage{enumitem}
\usepackage{amsthm, amssymb}
\usepackage{bm}
\usepackage{url}

\usepackage{booktabs}
\usepackage{caption}

\usepackage{imakeidx}
\makeindex[columns=1]

\theoremstyle{plain} 
\newtheorem{teo}{Theorem}[section]
\theoremstyle{plain} 
\newtheorem{teor}{Theorem}[section]

\theoremstyle{definition}

\theoremstyle{plain} 
\newtheorem{prop}[teo]{Proposition}
\theoremstyle{plain}
\newtheorem{lem}[teo]{Lemma}
\theoremstyle{plain}
\newtheorem{cor}[teo]{Corollary}	
\theoremstyle{definition}
\newtheorem{oss}[teo]{Remark}
\theoremstyle{definition}

\theoremstyle{definition}

\theoremstyle{plain}

\newcommand*{\sn}{\unlhd \unlhd \ }
\DeclareMathOperator{\spr}{sp}
\DeclareMathOperator{\fpr}{fpr}

\DeclareMathOperator{\fix}{Fix}

\begin{document}

\title{Subnormalizers and solvability in finite groups}
\author{Pietro Gheri}
\address{Dipartimento di Matematica e Informatica ``U. Dini'',\newline
Universit\`a degli Studi di Firenze, viale Morgagni 67/a,
50134 Firenze, Italy.}
\email{pietro.gheri@unifi.it}

\dedicatory{This work is dedicated to the memory of Carlo Casolo.\\
His knowledge, his curiosity, his humility and his humanity were an example to all of his students and friends.}

\begin{abstract}
For a finite group $G$, we study the probability $\spr(G)$ that, given two elements $x,y \in G$, the cyclic subgroup $\langle x \rangle$ is subnormal in the subgroup $\langle x, y \rangle$. This can be seen as an intermediate invariant between the probability that two elements generate a nilpotent subgroup and the probability that two elements generate a solvable subgroup. We prove that $\spr(G) \leq 1/6$ for every nonsolvable group $G$. 
\end{abstract}

\maketitle

\section{Introduction}

Let $G$ be a finite group. Given a group theorethical property $\mathcal{X}$ one can define the \textit{degree of $\mathcal{X}$ of $G$} as the probability that two randomly chosen elements in $G$ generate a $\mathcal{X}$-subgroup, that is
\[
\frac{\lbrace (x,y) \in G \times G \ | \ \langle x, y \rangle \mbox{ is a } \mathcal{X}-\mbox{group} \rbrace }{|G|^2}.
\]  

What seems a necessary condition for this probability to be meaningful is that $\mathcal{X}$ is closed under subgroups and quotient groups. 

The degree of commutativity was widely studied since W.H. Gustafson first introduced it in \cite{gustafson:degcom}, proving that the degree of commutativity of a nonabelian finite group is at most $5/8$. The degree of nilpotence and of solvability were the the focus of \cite{guralnick:solvable}, where two Gustafson-like results were proved for these probabilities: the degree of nilpotence of a nonnilpotent group is at most $1/2$ and that of solvability of a nonsolvable group is at most $11/30$. All these bounds are best possible.

In \cite{gheri:degnil}, in order to treat the degree of nilpotence of $G$, two linked quantities turned out to be useful, namely the ratio $\spr_G(x)$ and the probability $\spr(G)$ defined respectively by
\begin{equation} \label{sprgx}
\spr_G(x)=\dfrac{|S_G(x)|}{|G|}
\end{equation}
\begin{equation} \label{sprg}
\spr(G)= \frac{1}{|G|}\sum_{g \in G} \spr_G(x),
\end{equation}
where $S_G(x)$ is the Wielandt's subnormalizer of the element $x$ of $G$, that is, 
\[
S_G(x)= \lbrace g \in G \ | \ \langle x \rangle \sn \langle x,g \rangle \rbrace,
\]
(where $\sn$ means ``is subnormal in").

The probability $\spr (G)$ is clearly at most the degree of nilpotence and at least the degree of solvability. Therefore, if $\spr(G) > 11/30$, then $G$ is solvable. However the bound $11/30$ is not best possible, as one can expect. 

In this paper we prove the following theorem.

\begin{teor} \label{unsesto}
Let $G$ be a finite group. If $\spr (G)> 1/6$, then $G$ is solvable. Moreover, the bound is best possible.
\end{teor}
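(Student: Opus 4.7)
My approach is induction on $|G|$, combined with an explicit computation for $A_5$ that establishes sharpness.

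\textbf{Sharpness.} I would first verify $\spr(A_5) = 1/6$ directly. For any nonidentity $x \in A_5$, the subnormalizer $S_{A_5}(x)$ coincides with the unique maximal overgroup of $\langle x \rangle$ in which $\langle x \rangle$ is subnormal: a point-stabilizer $A_4$ containing the Sylow $2$-subgroup through $x$ if $|x|=2$ (giving $|S_{A_5}(x)|=12$); the normalizer $S_3 = N_{A_5}(\langle x \rangle)$ if $|x|=3$ (giving $|S_{A_5}(x)|=6$); and the normalizer $D_{10}$ if $|x|=5$ (giving $|S_{A_5}(x)|=10$). Summing over conjugacy classes,
\[
\sum_{x \in A_5}|S_{A_5}(x)| = 60 + 15\cdot 12 + 20\cdot 6 + 24\cdot 10 = 600,
\]
so $\spr(A_5) = 600/60^2 = 1/6$.

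\textbf{Reduction.} For the inequality, the key monotonicity $\spr(G) \leq \spr(G/N)$ holds for every $N \trianglelefteq G$: subnormality is preserved under quotient maps, so every pair $(x,y)$ counted by $\spr(G)$ projects to a pair counted by $\spr(G/N)$, and each target pair has $|N|^2$ preimages. Arguing by induction on $|G|$, let $G$ be a minimal nonsolvable counterexample. Every proper quotient of $G$ is then solvable, which forces $G$ to have a unique minimal normal subgroup $N$ (otherwise $G' \leq N_1 \cap N_2 = 1$ for two distinct minimal normals, contradicting nonsolvability), and since $G/N$ is solvable while $G$ is not, $N \cong T^k$ for some nonabelian simple $T$. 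Combining the monotonicity with the expected multiplicativity of $\spr$ on direct products (at least when one factor is nilpotent) should further reduce the situation to $G$ almost simple with socle $T$.

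\textbf{Almost simple case and main obstacle.} For almost simple $G$ with nonabelian simple socle $T$, the Fitting subgroup satisfies $F(G) = 1$, and since every cyclic subnormal subgroup of a finite group lies in the Fitting subgroup, $\langle x \rangle$ is never subnormal in $G$ for $x \neq 1$. Hence $S_G(x)$ is contained in the union of \emph{proper} overgroups of $\langle x \rangle$ in which $\langle x \rangle$ is subnormal, so $|S_G(x)|$ is controlled by the order of the largest such overgroup --- typically $|N_G(\langle x \rangle)|$, possibly enlarged by local nilpotent structure. The main obstacle is the CFSG-based uniform bound: one must prove $\spr(G) < 1/6$ for every almost simple $G$ different from $A_5$ (including a separate check that $\spr(S_5) < 1/6$). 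For $A_n$ with $n \geq 6$ the subnormalizers of elements of long cycle type shrink rapidly relative to $|A_n|$; for groups of Lie type one would combine Liebeck's bound on maximal subgroup orders with centralizer estimates for semisimple elements; for sporadics, direct verification from character tables should suffice. The essential difference from the coarser $11/30$ threshold of \cite{guralnick:solvable} is that $\spr$ counts only pairs with $\langle x \rangle \sn \langle x, y \rangle$, not merely pairs generating a solvable subgroup, so one must analyse precisely \emph{which} overgroups of $\langle x \rangle$ are subnormalizing --- and this finer analysis is the technical heart of the argument.
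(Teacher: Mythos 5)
Your sharpness computation for $A_5$ is correct, and the reduction to a minimal nonsolvable group with a unique nonabelian minimal normal subgroup $N\cong T^k$ matches the paper. From that point on, however, there are genuine gaps. First, the proposed reduction to the almost simple case does not work: $\spr$ has no multiplicativity on the extensions that actually occur (a product of $k$ copies of $T$ extended by a group permuting the factors), and the paper never makes such a reduction for the full theorem --- it keeps $G\in\mathfrak{M}_{ns}$ with socle $L^k$ and runs a wreath-product analysis (Lemma \ref{Omega in coset prodotto diretto}, Proposition \ref{coset intrecciato ciclo}, Corollary \ref{coset intrecciato permutazione}) to count $2$-elements in the cosets of $N$. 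Only the subproblem about elements of order $3$ reduces, via Lemma \ref{riduzione tre un sesto CFSG}, to almost simple groups plus a three-fold permutation case.

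Second, your quantitative handle on $|S_G(x)|$ is wrong. The set $S_G(x)$ is not a subgroup in general, and it is not ``controlled by the order of the largest overgroup in which $\langle x\rangle$ is subnormal, typically $|N_G(\langle x\rangle)|$'': by Casolo's formula (Theorem \ref{form subnor theorem}), for a $p$-element $x$ one has $|S_G(x)|=\lambda_G(x)|N_G(P)|=\alpha_G(x)|C_G(x)|$, which for $2$-elements is typically far larger than $|N_G(\langle x\rangle)|$ (already in your own $A_5$ computation, $|S_{A_5}(x)|=12$ for an involution while $|N_{A_5}(\langle x\rangle)|=4$). The paper's proof rests on three tools absent from your outline: (i) the identification of $\spr_G(x)$ with the fixed point ratio of $x$ acting on $Syl_p(G)$, which is what lets the CFSG bounds of Liebeck--Saxl and Burness enter; (ii) Proposition \ref{uno su p piu uno}, which together with $\spr_G(x)\le\spr_G(x_p)$ for a suitable prime-power part $x_p$ of a composite-order element shows that any element whose order is divisible by a prime power $q\ge 5$ contributes at most $1/(q+1)\le 1/6$, so that only $2$-elements and elements of order $2^n\cdot 3$ need finer analysis; and (iii) the identity $\sum_{x\in\mathfrak{U}_2(G)}|S_G(x)|=|P||G|$ (Lemma \ref{somma dei subnormalizzatori dei p-elementi}), which converts the total contribution of the $2$-elements into the single inequality $|\mathfrak{U}_2(G)|\ge 6|P|$. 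Without (ii) you have no treatment of composite-order elements at all, and without (iii) the $2$-elements --- whose individual ratios $\spr_G(x)$ can be close to $1$ --- cannot be bounded one at a time. These are not routine details to be filled in; they are the content of the proof.
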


The proof relies on CFSG, while the fact that the bound is best possible follows from a direct calculation that gives $\spr(A_5)=1/6$. 

A huge part of the proof of Theorem \ref{unsesto}  consists in using known bounds on the so-called \textit{fixed point ratio}. Given an element $x$ of a finite group $G$ acting on a set $\Omega$, the fixed point ratio of $x$ is the proportion of the elements of $\Omega$ that are fixed by $x$. The fixed point ratio has been widely studied (see for example \cite{burness:fprclassicalII}, \cite{liebeckshalev:simplegpspermgps}, \cite{gluckmagaard:charfprcclas} and the survey by T.C. Burness \cite{burness:surveyfpr}) and the reason why it is useful in our case is that when $x$ is a $p$-element for some prime $p$, then $\spr_G(x)$ is the fixed point ratio of $x$ with respect to the action of $G$ on its Sylow $p$-subgroups.

All groups considered in the paper are finite.

\section{Outline of the proof} \label{sketch}

In this section we describe the main steps of the proof of Theorem \ref{unsesto}, whose details we give in next sections.

Let $G$ be a nonsolvable group. We want to prove that $\spr(G) \leq 1/6$.
We state the following lemma, whose proof is straightforward.

\begin{lem}
Let $N$ be a normal subgroup of $G$. Then $\spr (G/N) \geq \spr (G)$.
\end{lem}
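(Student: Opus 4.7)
The plan is to reinterpret $\spr(G)$ as a probability on the set of pairs and then exploit the fact that subnormality is preserved by group homomorphisms. First I would rewrite
\[
\spr(G) = \frac{1}{|G|^2}\sum_{x \in G} |S_G(x)| = \frac{|A_G|}{|G|^2},
\]
where $A_G = \{(x,g) \in G \times G \mid \langle x \rangle \sn \langle x, g \rangle\}$, by unwinding the definitions (\ref{sprgx}) and (\ref{sprg}).

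Next I would invoke the standard fact that if $H \sn K$ in a group and $\varphi$ is a group homomorphism, then $\varphi(H) \sn \varphi(K)$ (a subnormal series for $H$ in $K$ maps to a subnormal series for $\varphi(H)$ in $\varphi(K)$). Applying this to the canonical projection $\pi \colon G \to G/N$ with $H = \langle x \rangle$ and $K = \langle x, g \rangle$, one gets $\langle xN \rangle \sn \langle xN, gN\rangle$ whenever $(x,g) \in A_G$. In other words, the map $\Pi \colon G \times G \to (G/N) \times (G/N)$ given by $(x,g) \mapsto (xN, gN)$ sends $A_G$ into $A_{G/N}$.

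Since $\Pi$ has constant fibre size $|N|^2$, we obtain $|A_G| \leq |N|^2 \cdot |A_{G/N}|$, and dividing through by $|G|^2 = |N|^2 \cdot |G/N|^2$ yields $\spr(G) \leq \spr(G/N)$, which is the claim.

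The only real content is the preservation of subnormality under homomorphisms, which is standard; the rest is a counting argument on fibres of $\Pi$. Hence this lemma is indeed \emph{straightforward}, and the author's remark that its proof poses no serious difficulty is accurate; I do not anticipate an obstacle beyond recalling this preservation property.
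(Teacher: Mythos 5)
Your argument is correct and is exactly the straightforward proof the paper omits: subnormality is preserved under the quotient map, so the pair-set $A_G$ pushes forward into $A_{G/N}$ along a map with constant fibre size $|N|^2$, giving $|A_G| \leq |N|^2|A_{G/N}|$ and hence $\spr(G) \leq \spr(G/N)$. No gaps; this matches the intended reasoning.
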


Arguing by induction, we can thus assume that $G$ is nonsolvable, while all its proper quotients are solvable. In particular, $G$ admits a unique minimal normal subgroup $N$, which is nonabelian.

A group satisfying this property is called a \textit{minimal nonsolvable monolithic group} and we set $\mathfrak{M}_{ns}$ to be the class of these groups. Then we are assuming that $G \in \mathfrak{M}_{ns}$.

To sketch the proof, we introduce a combinatorial meaning of $\spr_G(x)$, when $x$ is a $p$-element, for $p$ a prime dividing $|G|$. In this case, the following nice formula proved by C.~Casolo, concerning the order of subnormalizers of $p$-subgroups, holds. 

\begin{teo}[\cite{casolo:subnor}] \label{form subnor theorem}  Let $H$ be a $p$-subgroup of $G$ and $P \in Syl_p(G)$. Set $\lambda_G(H)$ the number of Sylow $p$-subgroups of $G$ containing $H$ and $\alpha_G(H)$ the number of $G$-conjugates of $H$ contained in $P$. Then
\begin{equation} \label{formulasubnor}
|S_G(H)|=\lambda_G(H) |N_G(P)| = \alpha_G(H)|N_G(H)|.
\end{equation}
\end{teo}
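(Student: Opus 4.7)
The plan is to introduce the auxiliary set
\[
X := \{g \in G : H^g \leq P\},
\]
compute $|X|$ in two distinct ways, and then identify $|X|$ with $|S_G(H)|$.

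For the two-way count of $|X|$: first, partition $X$ by the Sylow $Q := gPg^{-1}$. The condition $H^g \leq P$ is equivalent to $H \leq Q$, so $Q$ ranges precisely over $\Omega := \{Q \in Syl_p(G) : H \leq Q\}$, and for each such $Q$ the fiber is a coset of $N_G(P)$ of size $|N_G(P)|$; this yields $|X| = \lambda_G(H)|N_G(P)|$. Alternatively, partition $X$ by the subgroup $K := H^g \leq P$. The fiber above each $K$ is a coset of $N_G(H)$ of size $|N_G(H)|$, since $H^{g_1} = H^{g_2}$ iff $g_1 g_2^{-1} \in N_G(H)$; and $K$ ranges over the $\alpha_G(H)$ $G$-conjugates of $H$ contained in $P$. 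This yields $|X| = \alpha_G(H)|N_G(H)|$, and already proves the equality $\lambda_G(H)|N_G(P)| = \alpha_G(H)|N_G(H)|$ stated in the theorem.

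It remains to establish $|S_G(H)| = |X|$. For the direction $S_G(H) \subseteq X$ (possibly realized by a bijection rather than set equality relative to the fixed $P$): if $g \in S_G(H)$, then $H \leq O_p(\langle H, g\rangle)$, and by normality of $O_p(\langle H, g\rangle)$ in $\langle H, g\rangle$ also $H^g \leq O_p(\langle H, g\rangle)$. Hence $H$ and $H^g$ lie in a common Sylow $Q \in \Omega$; Sylow conjugacy (applied within the action of $G$ preserving $\Omega$) transports $Q$ to $P$. For the reverse direction $X \subseteq S_G(H)$, one must show that $H^g \leq P$ forces $H \sn \langle H, g\rangle$, equivalently that the normal closure $\langle H^{\langle H, g\rangle}\rangle$ is a $p$-subgroup. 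The natural starting point is that $\langle H, H^g\rangle \leq P$ is already a $p$-group; the task is to propagate this to all iterated conjugates $H^{g^k}$, producing a normal $p$-subgroup of $\langle H, g\rangle$ that contains $H$.

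The principal obstacle is precisely this last step: translating the Sylow-containment $H, H^g \leq P$ (a local condition inside $P$) into the subnormality $H \sn \langle H, g\rangle$ (a structural property of a generated subgroup). I expect the technical heart to be a Sylow-theoretic or Frattini-style argument inside $K := \langle H, g\rangle$, exploiting how Sylow $p$-subgroups of $K$ embed into Sylows of $G$ and the fact that every subgroup of a $p$-group is subnormal, to assemble the required normal chain from $H$ up to $K$.
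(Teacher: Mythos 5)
The paper does not prove this statement at all: it is imported verbatim from Casolo's article \cite{casolo:subnor}, so there is no internal proof to compare yours with, and I can only assess the proposal on its own terms. Your double count of $X=\lbrace g\in G : H^g\leq P\rbrace$ is correct and does establish the second equality $\lambda_G(H)|N_G(P)|=\alpha_G(H)|N_G(H)|$; that part is routine Sylow theory. The entire substance of the theorem, however, is the first equality $|S_G(H)|=\lambda_G(H)|N_G(P)|$, i.e.\ your claim $|S_G(H)|=|X|$, and this is where the proposal breaks down.

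The inclusion $X\subseteq S_G(H)$, which you correctly single out as the remaining task, is not merely unproved: it is false in general. Observe that $S_G(H)$ does not depend on $P$, whereas $X=X_P$ does. Writing $X_Q=\lbrace g : H^g\leq Q\rbrace$ for a Sylow $p$-subgroup $Q$, the set $X_Q$ is the disjoint union of the cosets $\lbrace g : H^g=K\rbrace$ of $N_G(H)$ over the conjugates $K$ of $H$ lying in $Q$, so $X_Q=X_{Q'}$ forces $Q$ and $Q'$ to contain exactly the same conjugates of $H$. Take $G=PSL_2(7)$ and $|H|=2$: each Sylow $2$-subgroup is dihedral of order $8$ and contains $\alpha_G(H)=5$ conjugates of $H$, while two distinct Sylow $2$-subgroups intersect in a group of order at most $4$, which contains at most $3$ involutions; hence the $\lambda_G(H)=5$ sets $X_Q$ with $H\leq Q$ are pairwise distinct. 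They all have the same cardinality $\lambda_G(H)|N_G(P)|$, which by the theorem equals $|S_G(H)|$, so at most one of them can be contained in $S_G(H)$; for every other choice of $P$ there is a $g$ with $H^g\leq P$ and $H$ not subnormal in $\langle H,g\rangle$. (This is consistent with Baer--Suzuki: containment of $H$ and $H^g$ in a common Sylow subgroup for a \emph{single} $g$ is far from forcing subnormality.) The converse direction is also short of what you need: from $g\in S_G(H)$ you get $H,H^g\leq O_p(\langle H,g\rangle)\leq Q$ for \emph{some} Sylow $Q$ containing $H$, which only yields $S_G(H)\subseteq\bigcup_{Q\supseteq H}X_Q$ and hence $|S_G(H)|\leq\lambda_G(H)^2|N_G(P)|$, off by a factor of $\lambda_G(H)$; replacing $g$ by some $gn$ with $Q^n=P$ is not an injection. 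What \emph{is} true and elementary is the reformulation $g\in S_G(H)$ if and only if $H^{\langle g\rangle}=\langle H^{g^i} : i\in\mathbb{Z}\rangle$ is a $p$-group (if it is, it is a normal $p$-subgroup of $\langle H,g\rangle$ containing $H$), but converting this into the exact count $\lambda_G(H)|N_G(P)|$ is the real content of Casolo's theorem; in \cite{casolo:subnor} it is obtained by induction on $|G|$ using Wielandt's subnormality criteria, not by a formal Sylow double count.
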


It is then clear, that $\spr_G(x)$ is the proportion of Sylow $p$-subgroups of $G$ containing $x$, namely
\[
\spr_G(x)=  \frac{\lambda_G(x) |N_G(P)|}{|G|}= \frac{\lambda_G(x)}{n_p(G)},
\]
where $\lambda_G(x)=\lambda_G(\langle x \rangle)$ and $n_p(G)$ is the number of Sylow $p$-subgroups of $G$.

Using this fact, in \cite{gheri:degnil} the next proposition is proven.

\begin{prop} \label{uno su p piu uno}
Let $p$ be a prime dividing the order of $G$ and $x$ be a $p$-element of $G$ of order $p^r$. If $\spr_G(x)>1/(p^k+1)$ for some positive integer  $k$, $1 \leq k \leq r$, then $x^{p^{k-1}} \in O_p(G)$.
\end{prop}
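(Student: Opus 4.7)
By Theorem~\ref{form subnor theorem}, the ratio $\spr_G(x)=\lambda_G(x)/n_p(G)$ equals the fraction of Sylow $p$-subgroups of $G$ containing $x$, i.e.\ the fixed-point ratio of the conjugation action of $\langle x\rangle$ on $\mathrm{Syl}_p(G)$. I plan to prove the contrapositive: assuming $x^{p^{k-1}}\notin O_p(G)$, I will show $\spr_G(x)\leq 1/(p^k+1)$.

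I first analyze the $\langle x\rangle$-orbit structure on $\mathrm{Syl}_p(G)$. For any Sylow $P$, the stabilizer $\langle x\rangle\cap N_G(P)$ equals $\langle x\rangle\cap P$ (since $x$ is a $p$-element), which has the form $\langle x^{p^j}\rangle$; hence the orbit of $P$ has size $p^j$, where $j$ is the least nonnegative integer with $x^{p^j}\in P$. Consequently $x^{p^{k-1}}\in P$ if and only if the $\langle x\rangle$-orbit of $P$ has size at most $p^{k-1}$. Since $O_p(G)=\bigcap_P P$, the hypothesis $x^{p^{k-1}}\notin O_p(G)$ is equivalent to the existence of an orbit of size $\geq p^k$. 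Split $\mathrm{Syl}_p(G)=A\sqcup B$ with $A=\{P\in\mathrm{Syl}_p(G) : x^{p^{k-1}}\in P\}$ and $B$ its complement; both parts are $\langle x\rangle$-invariant, every fixed Sylow lies in $A$, and every $\langle x\rangle$-orbit contained in $B$ has size $\geq p^k$.

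The crux, and the main obstacle, is to upgrade the trivial bound $\lambda_G(x)\leq|A|=n_p(G)-|B|$ to the sharp inequality $|B|\geq p^k\,\lambda_G(x)$; once this is established, $(p^k+1)\lambda_G(x)\leq\lambda_G(x)+|B|\leq|A|+|B|=n_p(G)$ yields $\spr_G(x)\leq 1/(p^k+1)$. To attack the sharp bound I use Casolo's dual identity $|S_G(\langle x\rangle)|=\alpha_G(\langle x\rangle)|N_G(\langle x\rangle)|$ to rewrite $\spr_G(x)=\alpha_G(\langle x\rangle)/|\langle x\rangle^G|$ as the proportion of $G$-conjugates of $\langle x\rangle$ contained in a fixed Sylow $P$. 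The action of $P$ on $\langle x\rangle^G$ by conjugation has orbits lying either entirely inside $P$ (which contribute to $\alpha_G(\langle x\rangle)$) or entirely outside $P$, and every outside orbit has size at least $p$: if $P$ normalized a conjugate $K\not\leq P$, then $\langle P,K\rangle=PK$ would be a $p$-subgroup strictly larger than $P$, contradicting Sylow maximality. This yields the factor of $p$ that suffices for the base case $k=1$.

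To extend the factor $p$ to the factor $p^k$ needed for general $k$, I would iterate the above analysis along the chain $\langle x\rangle\supset\langle x^p\rangle\supset\cdots\supset\langle x^{p^{k-1}}\rangle$. The bridge between successive steps is the $G$-equivariant fibre map $\phi\colon x^G\to (x^{p^{k-1}})^G$, $y\mapsto y^{p^{k-1}}$: since $y\in P$ forces $\phi(y)\in P$, a direct fibre count yields $|x^G\cap P|/|x^G|\leq |(x^{p^{k-1}})^G\cap P|/|(x^{p^{k-1}})^G|$, so $\spr_G$ only increases along the chain. Combining this monotonicity with the $k=1$ estimate applied at each intermediate stage, and carefully keeping track of how $P$-orbits of conjugates redistribute between the ``inside'' and ``outside'' parts when one replaces $\langle x^{p^i}\rangle$ by $\langle x^{p^{i+1}}\rangle$, is where the delicate bookkeeping occurs; it is here that the true $p^k$ factor (rather than merely $p$) emerges, completing the contrapositive.
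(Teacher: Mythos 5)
The paper does not actually prove this proposition itself --- it is imported from \cite{gheri:degnil} --- so your attempt has to stand on its own, and as written it is not a complete proof. Your reductions are all correct and well organized: the orbit analysis of $\langle x\rangle$ on $Syl_p(G)$, the identification of the stabilizer of $P$ with $\langle x\rangle\cap P$, the equivalence of $x^{p^{k-1}}\notin O_p(G)$ with the existence of an orbit of length at least $p^k$, and the reduction of the whole statement to the inequality $|B|\geq p^k\lambda_G(x)$ (or, in the dual count, $|x^G\setminus P|\geq p^k\,|x^G\cap P|$). But that inequality, which you yourself flag as the crux, is never established --- not even in the base case $k=1$.

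Concretely: your $k=1$ argument shows that every $P$-orbit on $\langle x\rangle^G$ consisting of conjugates not contained in $P$ has length at least $p$. This gives $|\langle x\rangle^G\setminus P|\geq p\cdot t$, where $t$ is the number of such outside orbits, whereas what is needed is a bound by $p\cdot\alpha_G(\langle x\rangle)$, i.e.\ $p$ times the number of \emph{inside conjugates}. Nothing in the argument ties $t$ to $\alpha_G(\langle x\rangle)$, so as soon as a Sylow subgroup contains more than one conjugate of $\langle x\rangle$ (e.g.\ $L=PSL_2(7)$, $p=2$, where each Sylow $2$-subgroup contains five of the twenty-one involutions, so that $16\geq 2\cdot 5$ is what must be certified) your estimate yields strictly less than the required bound. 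For general $k$ the situation is worse: the monotonicity $\spr_G(x)\leq\spr_G(x^{p^{k-1}})$ that you propose as the bridge runs in the wrong direction, since it can only reduce the problem to the $k=1$ bound $1/(p+1)$, which is strictly weaker than the target $1/(p^k+1)$ once $k\geq 2$; and the ``delicate bookkeeping'' from which the factor $p^k$ is supposed to emerge is precisely the missing proof. In its present form the attempt is an accurate description of what must be shown, together with two unproved claims where the actual content of the proposition lives.
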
	

Since in our assumptions $G$ has trivial Fitting subgroup we can assume that 
\[
\spr_G(x) \leq \dfrac{1}{q+1},
\]
where $q$ is the maximum prime power dividing $|x|$. Therefore, if the order of an element $x \in G$ is divided by a prime power $q \geq 5$, its contribution in the sum (\ref{sprg}) is at most $1/6$.	

We denote with $\mathfrak{U}_2(G)$ the set of $2$-elements in $G$, that is, the union of all $2$-Sylow subgroups of $G$. Moreover we let $\mathfrak{V}$ be the set of $\lbrace 2,3 \rbrace$-elements $x$ in $G$ such that $|x|=2^n\cdot 3$ for some $n \in \mathbb{N}$. For such an element $x$, set $x_3 := x^{2^n}$. Note that for every $x \in \mathfrak{V}$, we have $\spr_G(x) \leq \spr_G(x_3)$. As we said above, if $x \in G$ is not in $\mathfrak{V} \cup \mathfrak{U}_2(G)$, then $\spr_G(x) \leq 1/6$.

We can bound the probability $\spr(G)$ as follows:
\begin{align} \label{arg gen un sesto}
 \spr(G) &  =  \frac{1}{|G|}  \sum_{g \in G} \spr_G(g)  \nonumber \\ 
&  \leq  \frac{1}{|G|} \left( \sum_{g \in \mathfrak{U}_2(G)} \spr_G(g) \right) + \frac{1}{|G|} \left( \sum_{g \in \mathfrak{V}} \spr_G(g) \right) + \frac{|G \setminus \left(  \mathfrak{U}_2(G) \cup \mathfrak{V} \right)|}{6|G|} \\
& \leq  \frac{1}{|G|} \left( \sum_{g \in \mathfrak{U}_2(G)} \spr_G(g) \right) + \frac{1}{|G|} \left( \sum_{g \in \mathfrak{V}} \spr_G(g_3) \right) + \frac{|G \setminus \left(  \mathfrak{U}_2(G) \cup \mathfrak{V} \right)|}{6|G|}. \nonumber
\end{align}

We divide the rest of the argument in two steps.

\begin{itemize} 
\item[\textit{Step 1}] In order to bound the middle term, $\frac{1}{|G|} \left( \sum_{g \in \mathfrak{V}} \spr_G(g_3) \right)$, we prove (Proposition \ref{tre un sesto}) that if $G$ nonsolvable group and $x$ is an element of order $3$ of $G$, not lying in the solvable radical of $G$, then
\[
\spr_G(x) \leq \frac{1}{6}.
\]
\end{itemize}

Given \textit{Step 1} we can rewrite (\ref{arg gen un sesto}) as follows:
\begin{equation}
\spr(G) \leq \frac{1}{|G|} \left( \sum_{g \in \mathfrak{U}_2(G)} \spr_G(g) \right) + \frac{|G \setminus \left(  \mathfrak{U}_2(G) \right)|}{6|G|}. 
\end{equation}

\begin{itemize}
\item[\textit{Step 2}] As an easy consequence of Theorem \ref{form subnor theorem}, we will show (Lemma \ref{somma dei subnormalizzatori dei p-elementi}) that the sum in the above formula is just the ratio between the cardinality of a Sylow $2$-subgroup of $G$ and that of $G$.
We then complete the proof of Theorem \ref{unsesto} by proving the following statement
\begin{equation} \label{Frob ratio un sesto}
\frac{| \mathfrak{U}_2(G)|}{|P|} \geq 6.
\end{equation}
We will show this for ``almost" every nonsolvable group $G$. The exceptions will be treated separately.
\end{itemize}

\section{Elements of order $3$}

In this section we prove \textit{Step 1} of the proof. Namely we want to prove

\begin{prop} \label{tre un sesto}
Let $G$ be a finite nonsolvable group with no normal solvable subgroups and let $x$ be an element of $G$ of order $3$.
Then
\[
\spr_G(x) \leq \frac{1}{6}.
\] 
\end{prop}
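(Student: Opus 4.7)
My plan is to reformulate the bound as a Sylow-counting statement, reduce to the almost simple case, and finish by invoking CFSG-based fixed-point ratio results.

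By Theorem~\ref{form subnor theorem}, $\spr_G(x)=\lambda_G(x)/n_3(G)$ is the proportion of Sylow $3$-subgroups of $G$ containing the order-$3$ element $x$, i.e.\ the fixed-point ratio of $x$ on $\mathrm{Syl}_3(G)$ under conjugation. A standard double count of pairs $(g,Q)$ with $g\in Q\in\mathrm{Syl}_3(G)$ and $g\in x^G$ also gives
\[
\spr_G(x)=\frac{|P\cap x^G|}{|x^G|}
\]
for any fixed Sylow $3$-subgroup $P$, so the task is equivalent to showing that no Sylow $3$-subgroup of $G$ contains more than a sixth of the $G$-conjugates of $x$. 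Applying Proposition~\ref{uno su p piu uno} with $p=3$, $k=1$ together with the hypothesis $O_3(G)=1$ already yields the weaker bound $\spr_G(x)\leq 1/4$; the task is to sharpen this to $1/6$.

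Since $G$ has trivial solvable radical, $N:=\mathrm{soc}(G)$ is a direct product of nonabelian simple groups, $C_G(N)=1$, and $G\hookrightarrow\mathrm{Aut}(N)$. I would first reduce to the case $G$ almost simple. When $N$ has several simple factors, the Sylow $3$-subgroups of $G$ carry an induced product-type structure and $\spr_G(x)$ decomposes as a product (or at worst a divisor of such a product) over the factors. Using the bound $1/4$ from the previous paragraph factor-wise, the estimate $(1/4)^2=1/16<1/6$ already settles every case in which $x$ projects nontrivially into at least two simple factors of $N$; the residual subcase, where $x$ cyclically permutes three $G$-conjugate simple factors, forces $x$ outside $N$, and an invariance argument restricts the Sylow $3$-subgroups fixed by $x$ to a very small set. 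This reduces the problem to $G$ almost simple with socle a nonabelian simple group $S$ and $x$ an order-$3$ element of $G$, possibly outer.

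For this almost simple case I would invoke CFSG and bound $|P\cap x^G|/|x^G|$ class by class. For $S=A_n$ ($n\geq 5$), the iterated wreath-product structure of a Sylow $3$-subgroup together with a cycle-type count yields $\spr_G(x)\leq 1/10$ uniformly. For $S$ of Lie type, the fixed-point ratio bounds on primitive actions of Liebeck--Shalev, Gluck--Magaard and Burness---cited in the introduction---give $\spr_G(x)=O(|S|^{-c})$ for some positive $c$, falling below $1/6$ outside a finite list of exceptions. The sporadic groups, small simple groups of Lie type, and outer order-$3$ elements (such as the field automorphisms of $\mathrm{PSL}_2(8)$, $\mathrm{PSL}_2(27)$ and $\mathrm{Sz}(8)$, and the triality of $\mathrm{P}\Omega_8^+(q)$) are then checked individually via character tables or direct Sylow counts. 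I expect the principal obstacle to be exactly this small-group case analysis: the bound $1/6$ is approached fairly closely in some small almost simple groups (recall that $\spr(A_5)=1/6$, though the contribution from order-$3$ elements is only $1/10$), so every small exceptional group and every outer order-$3$ class must be handled separately.
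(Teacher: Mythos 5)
Your overall architecture matches the paper's: the identity $\spr_G(x)=|P\cap x^G|/|x^G|$, the reduction through the socle to the almost simple case, the separate treatment of an order-$3$ element cyclically permuting three simple factors (where the paper, like you, gets a bound of the shape $1/n_3(L)^2<1/16$), and the bound $1/10$ for alternating socles. Your product decomposition $\spr_G(x)=\prod_i\spr_{L_i}(x_i)$ for $x\in N=\mathrm{soc}(G)$, combined with the $1/4$ bound from Proposition~\ref{uno su p piu uno}, is a clean variant of the paper's reduction via Lemma~\ref{sp x cresce sui sottogruppi}; both are fine.

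The genuine gap is in the Lie type step. The uniform fixed-point-ratio bound available for arbitrary primitive (hence transitive) actions is Liebeck--Saxl's $\fpr(x)\leq 4/(3q)$, which falls below $1/6$ only for $q\geq 8$. The exceptions are therefore \emph{all} classical groups over $\mathbb{F}_q$ with $q\in\{2,3,4,5,7\}$, in unbounded dimension --- infinitely many groups, not ``a finite list'' that can be ``checked individually via character tables or direct Sylow counts.'' The sharper bounds of Liebeck--Shalev and Burness of the form $\fpr(x)<|x^G|^{-c}$ exclude subspace actions, and the action on $\mathrm{Syl}_3(G)$ factors through the action on cosets of a maximal overgroup of $N_G(P)$ which you have not identified, so you cannot simply invoke them. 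This is exactly where the bulk of the paper's proof lives: for $q<8$ one uses Maschke's theorem (and, for groups with forms, Lemma~\ref{non singolare}; for $q=3$, the unipotent class analysis via Jordan blocks) to produce a small-dimensional $\langle x\rangle$-invariant nondegenerate subspace $W$ on which $x$ acts nontrivially, and then Lemma~\ref{sp x cresce sui sottogruppi} to replace $G$ by the bounded-rank group $S\Delta(W)\langle x\rangle$, leaving genuinely finitely many groups for GAP. Without some such rank-reduction device (or an explicit lower bound on $|x^G|$ against $|G|_3$ growing with the rank for fixed small $q$), your plan does not close.
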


Before proving Proposition \ref{tre un sesto}, we make a remark about Theorem \ref{form subnor theorem} which will be useful in the rest of the paper. 

\begin{oss} \label{ossalfa}
If $H= \langle x \rangle$ is a cyclic $p$-subgroup of $G$ then, calling $\alpha_G(x)$ the number of $G$-conjugates of $x$ that are contained in a fixed Sylow $p$-subgroup, we have
\[
S_G(x)=\alpha_G(\langle x \rangle ) |N_G(\langle x \rangle )| = \alpha_G(x) |C_G(x)|.
\]
In particular,
\[
\spr_G(x)= \frac{\alpha_G(x) |C_G(x)|}{|G|} = \frac{\alpha_G(x)}{|x^G|}.
\]
\end{oss}

\begin{lem} \label{sp x cresce sui sottogruppi}
Let $p$ be a prime dividing $|G|$, $H$ a subgroup of $G$ and $x \in H$ be a $p$-element. Then the following holds.
\begin{itemize}
\item[i)] $\spr_G(x) \leq \spr_H(x)$.
\item[ii)] If $H \unlhd G$, then $\spr_G(x) = \spr_H(x)$.
\end{itemize}
\end{lem}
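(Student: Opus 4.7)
I will realize both $\spr_G(x)$ and $\spr_H(x)$ as densities of parallel subsets and then compare them coset by coset. Choose $P\in\mathrm{Syl}_p(H)$ containing $x$ and extend it to $P^{*}\in\mathrm{Syl}_p(G)$ with $P\leq P^{*}$. By Remark~\ref{ossalfa}, $\spr_G(x)=|P^{*}\cap x^G|/|x^G|$ and $\spr_H(x)=|P\cap x^H|/|x^H|$. Setting
\[
T_G=\{g\in G: g^{-1}xg\in P^{*}\},\qquad T_H=\{h\in H: h^{-1}xh\in P\},
\]
the standard fibre count $|\{g\in G: g^{-1}xg=y\}|=|C_G(x)|$ gives $|T_G|=|P^{*}\cap x^G|\cdot|C_G(x)|$ and $|T_H|=|P\cap x^H|\cdot|C_H(x)|$, hence $\spr_G(x)=|T_G|/|G|$ and $\spr_H(x)=|T_H|/|H|$.

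\textbf{Coset bound for (i).} Since $P^{*}\cap H$ is a $p$-subgroup of $H$ containing the Sylow $P$, maximality forces $P^{*}\cap H=P$, so in particular $T_G\cap H=T_H$. I decompose $G$ into right cosets of $H$ and show $|T_G\cap Hg|\leq|T_H|$ for every $g$: rewriting $(hg)^{-1}x(hg)\in P^{*}$ as $h^{-1}xh\in Q$ with $Q=gP^{*}g^{-1}\in\mathrm{Syl}_p(G)$, the same fibre count gives
\[
|T_G\cap Hg|=|(Q\cap H)\cap x^H|\cdot|C_H(x)|.
\]
Now $Q\cap H$ is a $p$-subgroup of $H$, hence lies in some $P''\in\mathrm{Syl}_p(H)$, and $H$-conjugacy of Sylows together with $H$-invariance of $x^H$ give $|P''\cap x^H|=|P\cap x^H|$; therefore $|(Q\cap H)\cap x^H|\leq|P\cap x^H|$, yielding the per-coset bound. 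Summing over the $[G:H]$ right cosets gives $|T_G|\leq[G:H]\cdot|T_H|$, which is (i).

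\textbf{Part (ii) and main difficulty.} When $H\unlhd G$, the classical fact that $Q\cap H\in\mathrm{Syl}_p(H)$ for every $Q\in\mathrm{Syl}_p(G)$ upgrades the above inequality to an equality on every coset, and (ii) follows at once. The step that needs the most care is the per-coset bound: one must observe that even when $Q\cap H$ fails to be a full Sylow of $H$, its intersection with $x^H$ is still dominated by that of a genuine Sylow of $H$, which is not immediate from inclusions alone and is precisely the point that allows the inequality to persist in the non-normal case.
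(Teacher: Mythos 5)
Your proof is correct. It is, at heart, the same double count as the paper's, but packaged differently: the paper counts the incidence set $\Lambda = \lbrace (y,P) \in x^H \times Syl_p(G) \ | \ y \in P \rbrace$ once by conjugates of $x$ (giving $\lambda_G(x)|x^H|$) and once by Sylow subgroups of $G$, whereas you count the set of elements conjugating $x$ into a fixed $P^{*} \in Syl_p(G)$ and slice it along right cosets of $H$. The decisive step is identical in both versions: for $Q \in Syl_p(G)$ one has $|x^H \cap Q| = |x^H \cap (Q \cap H)| \leq |x^H \cap P''|$ for any $P'' \in Syl_p(H)$ containing $Q \cap H$, with the right-hand side independent of $P''$ by $H$-conjugacy of Sylow subgroups; and part (ii) in both cases comes from $Q \cap H \in Syl_p(H)$ when $H \unlhd G$. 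What your reorganization buys is a transparent ``averaging over cosets'' picture ($|T_G \cap Hg| \leq |T_H|$ for each coset, so the density of $T_G$ in $G$ is at most that of $T_H$ in $H$), at the cost of an extra translation between conjugating elements and conjugates via the fibre count $|C_G(x)|$, which the paper avoids by working directly with $\lambda_G$ and $\alpha_H$ from Theorem \ref{form subnor theorem} and Remark \ref{ossalfa}. Both are complete; neither needs anything beyond Sylow theory once the formula of Remark \ref{ossalfa} is granted.
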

\begin{proof}
Let $\mathcal{S}=Syl_p(G)$ be the set of the Sylow $p$-subgroups of $G$ and let
\[
\Lambda = \lbrace (y,P) \in x^H \times \mathcal{S} \ | \ y \in P \rbrace.
\]
We count the elements in $\Lambda$ first by summing on the first component, and then on the second one.
Using Theorem \ref{form subnor theorem} and the fact that $\lambda_G$ is invariant on the conjugacy classes, we get
\[
|\Lambda|= \sum_{y \in x^H} \lambda_G(y) = \lambda_G(x) |x^H|.
\]
Also
\begin{equation} \label{Lambda}
|\Lambda|= \sum_{P \in \mathcal{S}} |x^H \cap P| = \sum_{P \in \mathcal{S}} |x^H \cap (P \cap H)| \leq \sum_{P \in \mathcal{S}} \alpha_H(x) = \alpha_H(x) |\mathcal{S}|,
\end{equation}
since $|x^H \cap (P \cap H)| \leq |x^H \cap Q|$, where $Q$ is a Sylow $p$-subgroup of $H$ containing $P \cap H$.
Finally we have that
\[
\frac{\lambda_G(x)}{|\mathcal{S}|} \leq \frac{\alpha_H(x)}{|x^H|},
\]
which is part (i), by Remark \ref{ossalfa}. 

Part (ii) follows by noticing that the inequality in (\ref{Lambda}) is an equality when $H \unlhd G$.
\end{proof}

\begin{lem} \label{riduzione tre un sesto CFSG}
Proposition \ref{tre un sesto} holds if and only if it holds for $G$ in the following three cases:
\begin{enumerate} 
\item $G$ is a nonabelian simple group. 
\item $G=L\langle x \rangle$, with $L$ a nonabelian simple group and $x \in Aut(L) \setminus L$.
\item $G=(L \times L \times L)\langle x \rangle$, $L$ a nonabelian simple group and $x$ an element that permutes the three factors.
\end{enumerate}
\end{lem}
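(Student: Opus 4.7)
The plan is to use Lemma~\ref{sp x cresce sui sottogruppi}(i): for every subgroup $H \leq G$ containing $x$, one has $\spr_G(x) \leq \spr_H(x)$. Granted the three listed cases of Proposition~\ref{tre un sesto}, it then suffices, for each $G$ as in the hypothesis, to exhibit a subgroup $H \ni x$ of one of the three forms (or a direct product of case-(1) pieces). I would begin by writing $N = \mathrm{Soc}(G) = L_1 \times \cdots \times L_n$, a direct product of nonabelian simple groups, and record that $C_G(N) = 1$ since the solvable radical of $G$ is trivial. Consequently $x$ acts faithfully on $N$ and permutes the $L_i$ in orbits of length $1$ or $3$.

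Suppose first $x \in N$. Write $x = \prod_{j \in J} x_j$ with $x_j \in L_j$ of order $3$, and set $H = \prod_{j \in J} L_j$. A Sylow $3$-subgroup of $H$ is the direct product of Sylow $3$-subgroups of the $L_j$'s, so $\spr_H(x) = \prod_{j \in J} \spr_{L_j}(x_j)$; picking any $j_0 \in J$ and invoking case~(1) on the simple factor $L_{j_0}$ yields $\spr_G(x) \leq \spr_H(x) \leq \spr_{L_{j_0}}(x_{j_0}) \leq 1/6$.

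Assume now $x \notin N$. If $\langle x \rangle$ has a $3$-orbit $\{L_1, L_2, L_3\}$ on the factors, I would take $H = (L_1 L_2 L_3)\langle x \rangle$. The three factors are $G$-conjugate, hence isomorphic to a common simple $L$; since every element of $L_1 L_2 L_3$ acts on $N$ via inner automorphisms preserving each $L_i$ while $x$ cycles them, we have $x \notin L_1 L_2 L_3$, and together with $|x| = 3$ this forces $\langle x \rangle \cap L_1 L_2 L_3 = 1$; hence $H$ has exactly the structure in case~(3). If instead $\langle x \rangle$ fixes every $L_i$, then $x$ induces on each factor an automorphism $\sigma_i \in \mathrm{Aut}(L_i)$ of order dividing $3$. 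If every $\sigma_i$ were inner, writing $\sigma_i = \mathrm{ad}(\ell_i)$ with $\ell_i \in L_i$, the element $x(\ell_1 \cdots \ell_n)^{-1}$ would lie in $C_G(N) = 1$, forcing $x \in N$, a contradiction. Picking $j$ with $\sigma_j$ outer and setting $H = L_j \langle x \rangle$, the outerness forces $x \notin L_j$ and the triviality of $Z(L_j)$ forces $C_H(L_j) = 1$, so $H$ embeds into $\mathrm{Aut}(L_j)$ with $x \in \mathrm{Aut}(L_j) \setminus L_j$, placing $H$ in case~(2).

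The most delicate step is this last one: one must guarantee that some $\sigma_j$ is honestly outer, rather than merely non-trivial, so that the resulting $H = L_j \langle x \rangle$ is a genuine case-(2) extension and not a direct factor of the form $L_j \times \langle x \rangle$. The hypothesis $C_G(N) = 1$, which encodes the absence of normal solvable subgroups, is exactly what rules out the inner-only configuration. With the three cases in hand, Lemma~\ref{sp x cresce sui sottogruppi}(i) completes the reduction.
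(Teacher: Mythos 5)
Your proof is correct and follows essentially the same route as the paper: reduce to a subgroup of one of the three shapes via Lemma \ref{sp x cresce sui sottogruppi}(i), splitting on whether $x$ lies in the socle and, if not, on the orbit structure of $x$ on the simple factors. Your explicit handling of the all-inner configuration via $C_G(\mathrm{Soc}(G))=1$ is in fact slightly more careful than the paper's, which only asserts that $x \in N_G(L_1)$ puts one in case (2) without addressing the possibility that $x$ induces an inner automorphism of $L_1$.
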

\begin{proof}
Let $G$ and $x \in G$, $|x|=3$ be a counterexample to Proposition \ref{tre un sesto}, with $G$ of minimal order. Then $\spr_G(x) > 1/6$. Let $N$ be the minimal normal subgroup of $G$. Being nonsolvable, $N$ is a direct product of $k$ copies of a simple group $L$:
\[
N = L_1 \times \dots \times L_k, \ L_i \simeq L, \ \forall i \in \lbrace 1, \dots , k \rbrace.
\]
First suppose that $x \in N$. Then $x=(x_1, \dots, x_k)$, with $|x_i| \in \lbrace 1,3 \rbrace$. Without loss of generality, we can suppose that $x_1 \neq 1$. Let
\[
H=L_1 \times \langle x_2 \rangle \times \dots \times \langle x_k \rangle.
\]
so that $x \in H$.
By Lemma \ref{sp x cresce sui sottogruppi}, $\spr_G(x)$ is smaller than $\spr_H(x)$, which is equal to $\spr_{L_1}(x_1)$.

Suppose now that $x \notin N$. We can assume $L_1 \nleq C_N(x)$. If $x \in N_G(L_1)$, then we can take $H = L_1\langle x \rangle$ and we are in case 2. Otherwise, for every $a \in N$,
\[
((L_1)^x)^a = (L_1)^{a^{x^{-1}}x}=(L_1)^x,
\]
so that $L_1^x$ is a normal subgroup of $N$. We know that the only normal subgroups of a direct product of simple groups are the products of the factors and so $L_1^x$ must be one of the subgroups $L_i$, say $L_2$. If $x \in N_G(L_2)$ then $L_1^{x^2}=L_1^x$ contradicting $x \notin N_G(L_1)$. Also $L_2^x \neq L_1$, for otherwise 
\[
L_1=(L_1)^{x^3}=(L_2)^{x^2}=(L_1)^x.
\]
We conclude that $L_3=L_2^x$ is different from $L_1$ and $L_2$. Of course $(L_3)^x=(L_1)^{x^3}=L_1$ and so $x$ cyclically permutes $L_1$, $L_2$ and $L_3$. 
\end{proof}

We treat the third case first.

\begin{prop} \label{tre un sesto ciclo}
Let $p$ be a prime, $H$ be a finite group and $G=(H_1 \times \dots \times H_p)\langle x \rangle$, with $H_i \simeq H$ for $i= 1,\dots,p$ and $x$ an element of order $p$ such that $H_i^x=H_{i+1}$ for $i=1, \dots, p-1$ and $H_p^x=H_1$. Then
\[
\spr_G(x) \leq \frac{1}{n_p(H)^{p-1}}.
\]
\end{prop}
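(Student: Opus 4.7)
The plan is to use the formula $\spr_G(x) = \lambda_G(x)/n_p(G)$ recorded in the paragraph following Theorem~\ref{form subnor theorem}, where $\lambda_G(x)$ denotes the number of Sylow $p$-subgroups of $G$ containing $x$. Set $N = H_1 \times \dots \times H_p$. Since $x$ permutes the $H_i$'s nontrivially while each $H_i$ is normal in $N$, we have $x \notin N$ and $\langle x \rangle \cap N = 1$, so $[G:N]=p$ and $|G|_p = p\,|N|_p$.

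The first step is to compute $\lambda_G(x)$ exactly. For any $P \in Syl_p(G)$, the group $P/(P \cap N)$ embeds in $G/N$ of order $p$, so $|P \cap N| \geq |P|/p = |N|_p$; since $P \cap N \leq N$, we conclude $P \cap N \in Syl_p(N)$. If moreover $x \in P$, then $\langle x \rangle \cap N = 1$ together with an order count gives $P = (P \cap N)\langle x \rangle$, and $P \cap N$ is normalized by $x$. Conversely, any $x$-invariant $Q \in Syl_p(N)$ produces a Sylow $p$-subgroup $Q\langle x \rangle$ of $G$ containing $x$, and $Q$ is recovered as $Q\langle x \rangle \cap N$. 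So $\lambda_G(x)$ equals the number of $x$-invariant Sylow $p$-subgroups of $N$. A Sylow $p$-subgroup of $N$ has the form $Q_1 \times \dots \times Q_p$ with $Q_i \in Syl_p(H_i)$, and $x$-invariance forces $Q_{i+1} = Q_i^x$ cyclically, so the only free choice is $Q_1 \in Syl_p(H_1)$. This gives $\lambda_G(x) = n_p(H)$.

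The second step is to bound $n_p(G)$ from below. The map $P \mapsto P \cap N$ from $Syl_p(G)$ to $Syl_p(N)$ is surjective: given $Q \in Syl_p(N)$, Sylow's theorem provides $P \in Syl_p(G)$ with $Q \leq P$, and then $Q \leq P \cap N \in Syl_p(N)$ forces $P \cap N = Q$. Therefore
\[
n_p(G) \geq n_p(N) = \prod_{i=1}^{p} n_p(H_i) = n_p(H)^p.
\]

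Combining the two steps,
\[
\spr_G(x) = \frac{\lambda_G(x)}{n_p(G)} \leq \frac{n_p(H)}{n_p(H)^p} = \frac{1}{n_p(H)^{p-1}}.
\]
No step is delicate; the only place requiring care is the $p$-power bookkeeping in the first step, and the whole argument rests on $[G:N]=p$, which follows from $x$ acting nontrivially on $\{H_1,\dots,H_p\}$.
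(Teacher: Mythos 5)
Your proposal is correct and follows essentially the same route as the paper: identify $\lambda_G(x)$ with the number of $\langle x \rangle$-invariant Sylow $p$-subgroups of $N=H_1\times\dots\times H_p$, count these as $n_p(H)$ via the cyclic constraint $Q_{i+1}=Q_i^x$, and bound $n_p(G)\geq n_p(N)=n_p(H)^p$. You merely make explicit two points the paper leaves implicit (the bijection between Sylow subgroups of $G$ containing $x$ and $x$-invariant Sylow subgroups of $N$, and the surjectivity of $P\mapsto P\cap N$ giving $n_p(G)\geq n_p(N)$), which is fine.
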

\begin{proof}
Let $K=H_1 \times \dots \times H_p$. The Sylow $p$-subgroups of $G$ containing $x$ are exactly those of the form $Q \langle x \rangle$, where $Q$ is a Sylow $p$-subgroup of $K$ normalized by $x$. Moreover $Q$ is uniquely determined as $P \cap K$, where $P$ is the chosen Sylow $p$-subgroup containing $x$. Thus
\[
\lambda_G(x)=|\lbrace Q \in Syl_p(K) \ | \ Q^x=Q \rbrace |.
\]
A Sylow $p$-subgroup $Q$ of $K$ is the direct product of the Sylow $p$-subgroups $Q_i$ of $H_i$. If $P$ is $\langle x \rangle$-invariant, then $Q_i^x=Q_{i+1}$ for $i=1, \dots, p-1$ and $Q_p^x=Q_1$, so that the number of $\langle x \rangle$-invariant Sylow $p$-subgroups of $K$ is less or equal to the number of Sylow $p$-subgroups of $H$.

Finally,
\begin{equation}
\begin{split}
\spr_G(x)= & \frac{\lambda_G(x)}{n_p(G)}=\frac{|\lbrace Q \in Syl_p(K) \ | \ Q^x=Q \rbrace |}{n_p(G)} \\
\leq  & \frac{n_p(H)}{n_p(K)}= \frac{n_p(H)}{n_p(H)^p}=\frac{1}{n_p(H)^{p-1}}.
\end{split}
\end{equation}
\end{proof}

We can now apply this fact to case $3$ in Lemma \ref{riduzione tre un sesto CFSG}. Since $L$ is a simple group, $n_3(L) > 4$ and so
\[
\spr_G(x) \leq \frac{1}{n_3(L)^2} < \frac{1}{16}.
\]

By what we said as far as now, the proof of Proposition \ref{tre un sesto} is reduced to proving
\begin{prop} \label{tre un sesto almost}
Let $G$ a almost simple group, $L \leq G \leq Aut(L)$, with $L$ a nonabelian simple group, and let $x \in G$ be an element of order $3$. Then $\spr_G(x) \leq 1/6$.
\end{prop}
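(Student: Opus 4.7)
The plan is to combine CFSG with known fixed-point-ratio estimates. By Remark~\ref{ossalfa},
\[
\spr_G(x)=\frac{|x^G\cap P|}{|x^G|}
\]
for a fixed Sylow $3$-subgroup $P$ of $G$ (using that $P$ is the unique Sylow $3$-subgroup of $N_G(P)$); that is, $\spr_G(x)$ is the fixed-point ratio of $x$ for the conjugation action of $G$ on $\mathrm{Syl}_3(G)$, and uniform bounds for such ratios are available in the literature. A first reduction: if $x\in L$, Lemma~\ref{sp x cresce sui sottogruppi}(ii) yields $\spr_G(x)=\spr_L(x)$, so one only needs to bound $\spr_L(x)$ in the simple group $L$; if $x$ is outer, Lemma~\ref{sp x cresce sui sottogruppi}(i) applied to $H=L\langle x\rangle$ reduces to the case $|G:L|=3$.

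Next I would split $L$ by CFSG. For the sporadic simple groups (and their automorphism groups of order divisible by $3$), both the conjugacy classes of order-$3$ elements and the Sylow $3$-structure can be read off from the ATLAS, so the bound becomes a finite verification. For $L=A_n$ with $n\ge 5$, the order-$3$ elements are products of disjoint $3$-cycles and a Sylow $3$-subgroup is a known iterated wreath product of copies of $C_3$; counting its order-$3$ elements via the base-$3$ expansion of $n$ and comparing with the conjugacy-class sizes gives the estimate, the tightest case being $A_5$ with $\spr_{A_5}(x)=1/10$. For $L$ of Lie type over $\mathbb{F}_q$, I would invoke the fixed-point-ratio estimates of Burness \cite{burness:fprclassicalII,burness:surveyfpr} and Liebeck--Shalev \cite{liebeckshalev:simplegpspermgps}, applied to the coset action on $N_G(P)$: in essentially all cases these produce a bound of order $O(1/q)$, which is well below $1/6$.

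The outer case $|G:L|=3$ is handled in the same spirit, noting that the outer order-$3$ automorphism $x$ must be of diagonal, field, or graph type (the latter arising only through the triality of $P\Omega_8^+(q)$), or a combination thereof; in each subcase either an FPR estimate or a direct Sylow count suffices. The main obstacle should be the finitely many small-rank, small-field Lie-type exceptions---chiefly $L_2(q)$, $L_3(q)$, $U_3(q)$ for small $q$, together with $P\Omega_8^+(q)$ under triality---where the uniform FPR bounds are too weak to deliver $1/6$ and an explicit enumeration of the Sylow $3$-subgroups containing $x$ is needed. The difficulty there should be organizational rather than conceptual: one must make sure that the long but finite case analysis covers every almost simple $G$ without omission.
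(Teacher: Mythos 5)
Your overall skeleton (reduction to $L$ or $L\langle x\rangle$ via Lemma~\ref{sp x cresce sui sottogruppi}, CFSG case split, fixed-point-ratio bounds for Lie type, ATLAS for sporadics) matches the paper's, but there is a genuine gap in the Lie-type case. The uniform fixed-point-ratio bound available for this action (Liebeck--Saxl, Theorem~\ref{liebsaxl}) is $4/(3q)$, and $4/(3q)\leq 1/6$ only for $q\geq 8$; so for $q\in\{2,3,4,5,7\}$ the ``$O(1/q)$'' estimate is \emph{not} below $1/6$, and it fails for \emph{every} rank, not just small rank. The residual set is therefore infinite --- it contains $PSL_n(2)$, $PSU_n(2)$, $PSp_{2n}(3)$, etc.\ for all $n$ --- and cannot be disposed of by ``an explicit enumeration of the Sylow $3$-subgroups containing $x$'' as you suggest. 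The missing idea is a reduction of rank: the paper uses Maschke's theorem to produce a small $\langle x\rangle$-invariant subspace $W$ (of dimension $2$ or $3$, enlarged to a nonsingular one of dimension at most $6$ via Lemma~\ref{non singolare} when a form is present, and extracted from the Jordan block structure when $x$ is unipotent and $q=3$), together with the monotonicity $\spr_G(x)\leq\spr_{S\Delta(W)\langle x_1\rangle}(x_1)$ from Lemma~\ref{sp x cresce sui sottogruppi}, to land in a \emph{finite} list of bounded-dimensional classical groups that can then be checked by machine. Without some such device your case analysis does not terminate.

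A secondary issue: for $L=A_n$ the plan of ``counting the order-$3$ elements of the Sylow $3$-subgroup and comparing with the conjugacy-class sizes'' does not work in its naive form, because for a single $3$-cycle $x$ the class $x^{A_n}$ has size $\sim n^3$ while $|P|\sim 3^{n/2}$, so the crude bound $\spr_{A_n}(x)\leq |P|/|x^{A_n}|$ diverges; you would need $|x^{A_n}\cap P|$ class by class, which is a nontrivial combinatorial computation. The paper sidesteps this entirely by embedding $x$ into a subgroup $H\simeq A_5\times C_3$ and using $\spr_{A_n}(x)\leq\spr_H(x)=1/10$ --- a one-line argument once Lemma~\ref{sp x cresce sui sottogruppi} is in place. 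Your reduction of the outer case to $|G:L|=3$ and your treatment of the sporadic groups are fine and agree with the paper.
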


We now prove Proposition \ref{tre un sesto almost} for every family of simple groups.

\subsection{Alternating groups}
Here we consider $L=A_n$, where $5 \leq n \in \mathbb{N}$. This is the easiest case, but it is representative of the strategy used in all the others. In order to use Lemma \ref{sp x cresce sui sottogruppi}, we want to find a subgroup $H$ of $G$ such that $x \in H$ and $\spr_H(x)\leq 1/6$.

A direct calculation shows that in $G=A_5$ or $G=A_5 \times C_3$ every noncentral element $y$ of order $3$ is such that $\spr_G(y)=1/10$.

Let $n \geq 6$. Each element of order $3$ is product of, say $k$, $3$-cycles with pairwise disjoint supports. Without loss of generality we can assume that 
\[
x=(1,2,3) (4,5,6) \dots (3k-2,3k-1,3k),
\]
with $3k \leq  n$.
Put $x_1=(1,2,3)(4,5,6)$, $x_2=xx_1^2$ and $y=(3,5)(4,6)$.
Clearly $x_2 \in C_G(x_1) \cap C_G(y)$. It follows that 
\[
H=\langle x_1,x_2,y \rangle \simeq \langle x_1,y \rangle \times \langle x_2 \rangle \simeq A_5 \times C_3.
\]
By Lemma \ref{sp x cresce sui sottogruppi} we have that
\[
\spr_{A_n}(x) \leq \spr_{H}(x) =1/10.
\]
The almost simple case here is trivial, since $3$ does not divide $|Out(A_n)|$ for any $n$.
Thus Proposition \ref{tre un sesto almost} holds when $L \simeq A_n, \ n \geq 5$.

\subsection{Finite groups of Lie type: the \emph{fixed point ratio}}

We know that for a $p$-element $x$ of a finite group $G$, 
\begin{equation} \label{probabilita sp scritture}
\spr_G(x)= \frac{\lambda_G(x)}{n_p(G)},
\end{equation}
that is the ratio between the number of Sylow $p$-subgroups containing $x$ and the total number of Sylow $p$-subgroups of $G$. If we consider the transitive action of $G$ on its Sylow $p$-subgroups, we see that $\lambda_G(x)$ is the number of Sylow $p$-subgroups fixed (normalized) by $x$. Therefore, $\spr_G(x)$ is what is commonly called the \textit{fixed point ratio} ($\fpr$) of $x$ with respect to this action.

A first remark to be made is that an equality similar to (\ref{probabilita sp scritture}) holds even when the action we consider is not the one on the Sylow $p$-subgroups. Namely, if the action of $G$ on a set $\Lambda$ is transitive and $H$ is a point stabilizer, then
\[
\fpr_\Lambda(g)=\frac{|x^G \cap H|}{|x^G|}.
\]  

The fixed point ratio has been much studied (see for example \cite{burness:fprclassicalII}, \cite{liebeckshalev:simplegpspermgps}, \cite{gluckmagaard:charfprcclas} and the survey by Burness \cite{burness:surveyfpr}), especially for what concerns primitive actions of finite groups of Lie type.

The most general result in this area, which is very useful for our particular and circumscribed case, is the main theorem in \cite{liebecksaxl:mindegprimperm}.

\begin{teo}[Theorem 1 in \cite{liebecksaxl:mindegprimperm}] \label{liebsaxl} Let $L$ be a finite simple group of Lie type on $\mathbb{F}_q$, $q=p^f$, and let $G$ be an almost simple group with socle $L$ acting faithfully and primitively on a set $\Omega$. Then for all $1 \neq g \in G$ 
\begin{equation} 
\fpr_G(x) = \dfrac{|\fix_\Omega (g)|}{|\Omega |} \leq \dfrac{4}{3q},
\end{equation}
apart from a short list of known exceptions.
\end{teo}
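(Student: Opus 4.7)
The plan is to follow the strategy of Liebeck--Saxl: bound $\fpr_\Omega(g)$ by a detailed, case-by-case analysis of the maximal subgroups of $G$, leveraging the classification of finite simple groups. The starting point is the standard orbit-counting identity
\begin{equation*}
\fpr_\Omega(g) = \frac{|g^G \cap H|}{|g^G|},
\end{equation*}
where $H$ is a point stabilizer; by primitivity of the action, $H$ is a maximal subgroup of $G$. Hence it suffices to bound the fusion of $g^G$ into each conjugacy class of maximal subgroups of $G$, one class at a time.

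Next, I would invoke Aschbacher's theorem on subgroup structure of classical groups (respectively the Liebeck--Seitz classification for exceptional groups) to partition the maximal subgroups of $G$ into a short list of families: the parabolic subgroups, the geometric Aschbacher classes $\mathcal{C}_1,\ldots,\mathcal{C}_8$, and the almost simple class $\mathcal{S}$. For each family, one obtains explicit upper bounds on $|H|$ as a polynomial in $q$. These are to be combined with lower bounds on $|g^G|$ -- equivalently, upper bounds on $|C_G(g)|$ -- which arise from the structure of centralizers in the ambient algebraic group via the Lang--Steinberg theorem. In the generic range, a straightforward order-comparison of the crude estimate $|g^G \cap H| \leq |H|$ against $|g^G|$ already yields the target bound $4/(3q)$, with room to spare.

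The difficult cases are those in which $H$ is large relative to $G$ (typically parabolic stabilizers in natural actions) and those in which $g$ is ``small'' in the sense of having a large centralizer, such as short-root elements, transvections, or unipotent elements with few Jordan blocks. For these, the crude estimate is insufficient and sharper tools are required: character-theoretic bounds via Deligne--Lusztig theory and Gelfand--Graev characters, or explicit geometric computations counting fixed points of $g$ on the flag-type variety $G/H$. A careful analysis of how each semisimple and unipotent class meets each type of maximal subgroup ultimately forces the bound.

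The main obstacle, and the source of the ``short list of known exceptions'', is the sporadic residue of low-rank groups over small fields $\mathbb{F}_q$, where the asymptotic estimates break down. These finitely many configurations cannot be ruled out by the generic machinery and must be compiled by direct computation, typically consulting the Atlas or a computer algebra system to either verify the $4/(3q)$ bound or record the exception. Producing and vetting this exception list is the most labor-intensive part of the proof.
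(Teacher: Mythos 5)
The first thing to note is that the paper does not prove this statement at all: it is quoted verbatim as Theorem 1 of Liebeck--Saxl and used as a black box, so there is no internal proof to compare against. Your proposal is therefore being measured against the original Liebeck--Saxl argument, and as it stands it is a roadmap rather than a proof. Every assertion with actual content is deferred: you never carry out a single case of the analysis over the Aschbacher classes, never produce an explicit lower bound on $|g^G|$ for any family of elements, and never exhibit even one entry of the exception list. Since the entire substance of the theorem is precisely this case-by-case verification --- the identity $\fpr_\Omega(g)=|g^G\cap H|/|g^G|$ and the reduction to maximal $H$ are one-line observations --- the proposal contains the easy $5\%$ of the proof and a description of the remaining $95\%$. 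That is a genuine gap, not a stylistic one: there is no way to certify that the bound $4/(3q)$ (as opposed to, say, $2/q$ or $4/(3q)+O(q^{-2})$) actually emerges without doing the estimates, and the ``short list of known exceptions'' cannot be characterized as known until it has been computed.

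A secondary point: the toolkit you invoke is partly anachronistic relative to the actual Liebeck--Saxl proof. Their argument is phrased in terms of minimal degrees of primitive permutation groups and proceeds by direct orbit counting on subspaces and cosets using the subgroup structure results available at the time; Deligne--Lusztig character bounds and Gelfand--Graev techniques belong to the later refinements of Liebeck--Shalev, Guralnick--Kantor and Burness (several of which are also cited in this paper, e.g.\ \cite{burness:fprclassicalII} and \cite{burness:surveyfpr}). This does not make your outline wrong --- those methods would also work --- but it means you are sketching a different (later) proof architecture while attributing it to the cited source. For the purposes of the present paper none of this matters, since the theorem is imported rather than reproved; but as a self-contained proof attempt the proposal would need the full quantitative case analysis to be supplied before it could be accepted.
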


Let us observe that the result is true for transitive actions as well, since if an action of $G$ on $\Lambda$ is transitive, then 
\[
\fpr_\Lambda(x)=\frac{|x^G \cap H|}{|x^G|} \leq \frac{|x^G \cap K|}{|x^G|} = \fpr_{G/K}(x),
\]
where $K$ is any maximal subgroup of $G$ containing $H$ and $G/K$ is the set of cosets of $K$ in $G$, on which $G$ acts primitively.

Consequently, when $G$ is an almost simple group of Lie type on $\mathbb{F}_q$ with $q \geq 8$, Proposition \ref{tre un sesto almost} is immediate. There are even better bounds for the exceptional groups of Lie type, studied in \cite{lawther:fprexceptional}, which give fixed point ratios smaller than $1/6$ even for values of $q$ smaller than $8$.

Thus, in completing the proof of Proposition \ref{tre un sesto almost}, we only have to deal with classical groups on $\mathbb{F}_q$ with $q<8$, besides the exceptions mentioned in Theorem \ref{liebsaxl}, which can be worked out with GAP (\cite{GAP4}), and the sporadic groups.

The following easy observation, whose proof is straightforward, allows us to work with nonprojective classical groups.

\begin{lem}
Let $G$ be a group, $p$ a prime, $x \in G$ a $p$-element and $Z=Z(G)$. Then $\spr_G(x)=\spr_{G/Z}(xZ)$. 
\end{lem}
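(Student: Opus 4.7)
The plan is to use Theorem~\ref{form subnor theorem} in its Sylow-ratio form, namely
\[
\spr_G(x) = \frac{\lambda_G(x)}{n_p(G)},
\]
and verify that both the numerator and the denominator are unchanged when we pass from $G$ to $\bar G := G/Z$. Write $\bar x = xZ$ and let $Z = Z_p \times Z_{p'}$ be the primary decomposition of the (central) abelian group $Z$.

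First I would establish that the map $P \mapsto PZ/Z$ gives a bijection $\mathrm{Syl}_p(G) \to \mathrm{Syl}_p(\bar G)$. Surjectivity is standard. For injectivity, note that $Z_p \leq P$ for every $P \in \mathrm{Syl}_p(G)$ (since $Z_p$ is a central $p$-subgroup), and that $PZ = P \cdot Z_{p'}$ is an internal direct product because $P \cap Z_{p'} = 1$; hence $P$ is recovered from $PZ$ as its unique Sylow $p$-subgroup. This yields $n_p(G) = n_p(\bar G)$.

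Next I would show that this bijection restricts to a bijection between the Sylow $p$-subgroups of $G$ containing $x$ and those of $\bar G$ containing $\bar x$. The only nontrivial direction: if $\bar x \in PZ/Z$, then $x \in PZ = P \times Z_{p'}$, so we can write $x = yz$ with $y \in P$ and $z \in Z_{p'}$. Since $y$ and $z$ commute and $x$ is a $p$-element while $z$ is a $p'$-element, comparing orders forces $z = 1$, so $x \in P$. This gives $\lambda_G(x) = \lambda_{\bar G}(\bar x)$, and combined with the previous step we obtain the claimed equality.

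The only subtle point is that the $p'$-part $Z_{p'}$ of the center could in principle destroy the bijection on the level of elements—an arbitrary coset representative of $\bar x$ need not lie in $P$—but the hypothesis that $x$ itself is a $p$-element rules this out, which is exactly where the $p$-element assumption is used.
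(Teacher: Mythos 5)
Your proof is correct and complete. The paper itself offers no argument here (the lemma is introduced as an observation ``whose proof is straightforward''), so there is nothing to clash with: your route through the Sylow-counting formula $\spr_G(x)=\lambda_G(x)/n_p(G)$, the bijection $P\mapsto PZ/Z$ on Sylow $p$-subgroups, and the observation that a $p$-element of $P\times Z_{p'}$ must already lie in $P$, is sound at every step, and you correctly isolate where the $p$-element hypothesis enters. It is worth noting that an even more elementary argument works directly from the definition of the subnormalizer and shows the hypothesis that $x$ is a $p$-element is not needed at all: for $H=\langle x,g\rangle$, subnormality passes to quotients, so $\langle x\rangle \sn H$ implies $\langle xZ\rangle \sn HZ/Z$; conversely, if $\langle xZ\rangle \sn HZ/Z$ then the preimage $\langle x\rangle Z$ is subnormal in $HZ$, hence $\langle x\rangle Z\cap H=\langle x\rangle(Z\cap H)$ is subnormal in $H$ by Dedekind's law, and since $\langle x\rangle(Z\cap H)$ is abelian (a cyclic group times a central subgroup), $\langle x\rangle$ is subnormal in it. Thus $S_G(x)$ is the full preimage of $S_{G/Z}(xZ)$ and the two ratios coincide for every $x\in G$. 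Your version buys nothing less for the application in the paper (where $x$ is always a $3$-element), but the definitional argument is shorter and more general.
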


We distinguish between two cases, whether or not $q=3$.

\subsection{Finite groups of Lie type: $q \neq 3$}

We begin our analysis with the linear case.

\begin{prop} \label{riduzione PSL 2 3}
Let $q=p^r$, with $p \neq 3$ a prime. Let $x \in GL_n(q)$, be a noncentral element such that $x^3 \in Z(GL_n(q))$. Then there is a noncentral $3$-element $y \in GL_m(q)$, with $m \in \lbrace 2,3 \rbrace$, such that
\[
\spr_{SL_n(q)\langle x \rangle}(x) \leq \spr_{SL_m(q) \langle y \rangle}(y).
\]
\end{prop}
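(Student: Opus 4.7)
The plan is to reduce the dimension of the ambient space by restricting $x$ to a well-chosen invariant subspace of $V := \mathbb{F}_{q}^{n}$ and then invoke Lemma~\ref{sp x cresce sui sottogruppi}(i). Since $p \neq 3$ and $x^{3} = \zeta I_{n}$ for some $\zeta \in \mathbb{F}_{q}^{*}$, the polynomial $T^{3}-\zeta$ is separable, so $x$ acts semisimply on $V$. The argument splits according to the factorization of $T^{3}-\zeta$ in $\mathbb{F}_{q}[T]$: (a) it splits completely (which forces $3 \mid q-1$ and $\zeta$ to be a cube in $\mathbb{F}_{q}$), (b) it factors as $(T-\mu)(T^{2}+\mu T+\mu^{2})$ with the quadratic irreducible (the case $3 \nmid q-1$), or (c) it is itself irreducible over $\mathbb{F}_{q}$ (forcing $3 \mid q-1$ and $\zeta$ a non-cube).

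In case (a), $x$ is diagonalizable over $\mathbb{F}_{q}$ with eigenvalues in $\{\mu,\mu\omega,\mu\omega^{2}\}$; noncentrality of $x$ forces at least two of the eigenspaces to be non-trivial, and I take $W$ to be the sum of two eigenlines corresponding to distinct eigenvalues, so $\dim W = 2$. In case (b), the $\mathbb{F}_{q}$-irreducible $x$-invariant summands of $V$ have dimension $1$ or $2$, and the $2$-dimensional ones must exist (noncentrality of $x$ rules out $V$ being entirely the $\mu$-eigenspace); I take $W$ of this type. In case (c) I take $W$ to be any $3$-dimensional $x$-irreducible summand of $V$. In each case $x|_{W}$ is noncentral in $GL(W)$ and $(x|_{W})^{3} = \zeta I_{W}$. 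In cases (a) and (b) the scalar $\mu \in \mathbb{F}_{q}$ lets me define $y := \mu^{-1}(x|_{W}) \in GL_{m}(q)$, a noncentral $3$-element with $y^{3} = I_{W}$; in case (c) no rational rescaling exists and I pick $y$ directly, say $y = \mathrm{diag}(1,\omega,\omega^{2}) \in GL_{3}(q)$, using $3 \mid q-1$.

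To obtain the inequality, pick an $x$-invariant complement $U$ of $W$ in $V$ (possible by semisimplicity), set $H_{0} := SL(W) \times \{I_{U}\} \leq SL(V)$, and let $H := H_{0}\langle x \rangle \leq SL_{n}(q)\langle x \rangle$. Since $x$ preserves $V = W \oplus U$ it normalizes $H_{0}$, and clearly $x \in H$. Lemma~\ref{sp x cresce sui sottogruppi}(i) then gives
\[
\spr_{SL_{n}(q)\langle x \rangle}(x) \;\leq\; \spr_{H}(x).
\]
Inside $H$, the element $\{I_{W}\}\times(x|_{U})$ commutes with every element of $H_{0}$ and with $x$ (both act block-diagonally on $W \oplus U$), so it lies in the center; quotienting $H$ by the cyclic subgroup it generates, together in cases (a)--(b) with the scalar subgroup $\langle \mu I_{V}\rangle$, collapses $H$ onto a group isomorphic to $SL_{m}(q)\langle y \rangle$, and the lemma stated just before Proposition~\ref{riduzione PSL 2 3}, asserting $\spr_{G}(x) = \spr_{G/Z}(xZ)$, then identifies $\spr_{H}(x)$ with $\spr_{SL_{m}(q)\langle y \rangle}(y)$.

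The main obstacle is the final central-quotient step. The lemma $\spr_{G}(x) = \spr_{G/Z}(xZ)$ requires $x$ to be a $3$-element, which is automatic when $|\zeta|$ is a power of $3$ but otherwise forces a passage to the $3$-part of $x$ via Wielandt's decomposition $S_{G}(x) = \bigcap_{p} S_{G}(x_{p})$. Case (c) is the subtlest because no $\mathbb{F}_{q}$-rational cube root of $\zeta$ is available, so the renormalization used in (a)--(b) cannot be repeated; I would expect to handle it by a more direct estimate through Remark~\ref{ossalfa}, using the fact that $x|_{W}$ acts irreducibly on a $3$-dimensional space to force $|x^{G}|$ to be large, hence $\spr_{G}(x) = \alpha_{G}(x)/|x^{G}|$ to be small.
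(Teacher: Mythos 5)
Your central construction --- an $\langle x\rangle$-invariant subspace $W$ of dimension $2$ or $3$ on which $x$ acts noncentrally, a Maschke complement $U$, and a descent to $SL(W)$ via Lemma~\ref{sp x cresce sui sottogruppi}(i) --- is exactly the paper's. (The paper produces $W$ with no case analysis at all: take $v_1$ not an eigenvector of $x$ and let $W=\langle v_1,v_1^x,v_1^{x^2}\rangle$; since $v_1^{x^3}=\omega v_1$ this is invariant of dimension $2$ or $3$. Your factorization of $T^3-\zeta$ arrives at the same kind of subspace more laboriously.)

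The proof is nonetheless incomplete, and the gaps all trace back to one ordering decision. The paper's first move is ``we can assume that $x$ has order a power of~$3$'': replacing $x$ by its $3$-part $x_3$ costs nothing, because $S_G(x)\subseteq S_G(x_3)$, because $x_3$ is still noncentral (the $3'$-part of $x$ lies in $\langle x^3\rangle$ and hence is scalar), and because Lemma~\ref{sp x cresce sui sottogruppi}(i) handles the passage from $SL_n(q)\langle x\rangle$ to its subgroup $SL_n(q)\langle x_3\rangle$. Once this is done, $y:=x|_W$ is already a noncentral $3$-element of $GL_m(q)$, and the final step is simply $\spr_{SL(W)\langle x\rangle}(x)=\spr_{SL(W)\langle y\rangle}(y)$ because $x|_U$ centralizes $SL(W)\langle y\rangle$; no rescaling and no quotient is needed. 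By deferring the passage to the $3$-part, you are driven into steps that do not work as written: (i) the element $\{I_W\}\times(x|_U)$ is in general \emph{not} an element of $H=(SL(W)\times\{I_U\})\langle x\rangle$ --- in $H$ the $U$-block $(x|_U)^k$ is tied to the coset $SL(W)\,(x|_W)^k$ in the $W$-block, and $(x|_W)^k$ need never have determinant $1$ when $k\equiv 1$ modulo the order of $x|_U$ --- so there is no central subgroup of $H$ to quotient by; (ii) $SL_m(q)\langle x|_W\rangle$ and $SL_m(q)\langle\mu^{-1}x|_W\rangle$ are in general distinct groups (compare their determinant subgroups), so the renormalization is not a central quotient of the group at hand; (iii) in case (c) your $y=\mathrm{diag}(1,\omega,\omega^2)$ bears no relation to $x$, so no inequality is established, and the fallback you sketch (bounding $\spr_G(x)$ directly via Remark~\ref{ossalfa}) would not prove the proposition, which asserts the existence of a $y$ satisfying the stated inequality. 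Note that even in case (c) the $3$-part of $x|_W$ is noncentral, since its $3'$-part lies in $\langle (x|_W)^3\rangle=\langle\zeta I_W\rangle$; so the uniform repair is exactly the paper's opening reduction.
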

\begin{proof}
We can assume that $x$ has order a power of $3$. Since $x^3 \in Z(GL_n(q))$, $x^3 = \omega I$, with $\omega$ a root of unity. 

Let then $V=\mathbb{F}_q^n$. Since $x \notin Z(G)$, we can take $v_1 \in V$ which is not an eigenvector for $x$.

We then take $v_2=v_1^x$ and $v_3=v_2^x$. Let $W$ be the span of $\lbrace v_1,v_2,v_3 \rbrace$. 
Since
\[
v_3^x=v_1^{x^3}=\omega v_1
\]
it is clear that $W$ is an $\langle x \rangle$-invariant subspace of dimension $2$ or $3$. Since $p \neq 3$, by Maschke's theorem there exists an $\langle x \rangle$-invariant complement $U$ of $W$.

Then $x \in A=GL(W) \times GL(U)$, say $x=x_1x_2$ in this decomposition. Clearly, for our choice of $W$, $x_1$ does not centralize $SL(W)$. 

Let $N=A \cap SL(V)$. We have
\begin{equation*}
\begin{split}
\spr_{SL(V)\langle x \rangle}(x) & \leq \spr_{N\langle x \rangle}(x) \leq \spr_{(SL(W) \times SL(U))\langle x \rangle}(x) \\
& \leq \spr_{SL(W)\langle x \rangle}(x)= \spr_{SL(W)\langle x_1 \rangle}(x_1).
\end{split}
\end{equation*}

All the inequalities follow from Lemma \ref{sp x cresce sui sottogruppi}, while last equality follows from the fact that $x_2$ centralizes $SL(W)\langle x_1 \rangle$.
\end{proof}

We now prove Proposition \ref{tre un sesto almost} when $L \simeq PSL_n(q)$.

If $q \in \lbrace 4,5,7 \rbrace$, we can check with GAP (\cite{GAP4}) that if 
\[
G=SL_m(q) \langle y \rangle
\]
with $m \in \lbrace 2,3 \rbrace$ and $|y|=3$, every element $g$ of order $3$ of $G$ is such that $\spr_G(g)\leq 1/6$. 

When $q=2$ we need a bigger $m$, since $SL_2(2)$ and $SL_3(2)$ are solvable. As in the proof of Proposition \ref{riduzione PSL 2 3} we find an $\langle x \rangle$-invariant subspace $W_1$ of dimension $2$ or $3$. We then take an $\langle x \rangle$-invariant complement $U$ of $W$. If $U$ is not an eigenspace for $x$, we repeat the argument to find an $\langle x \rangle$-invariant subspace $W_2$ of $U$ of dimension $2$ or $3$. Instead, if $x$ acts as a multiple of the identity on $U$ we just take $W_2$ to be any subspace of dimension $2$ of $U$. We set $W=W_1+W_2$ and repeat the conclusion of the proof of Proposition \ref{riduzione PSL 2 3}. Thus we have to check $3$-elements in groups of the form $PSL_m(2)\langle y \rangle$ with $4 \leq m \leq 6$ and $y$ a $3$-element. Again using GAP (\cite{GAP4}) we see that these elements are such that $\spr_G(g) \leq 1/6$.

This gives Proposition \ref{tre un sesto almost} for $L \simeq PSL_n(q)$, since any automorphism of order $3$ of $PSL_n(q), q<8,$ comes from conjugation by elements of $GL_n(q)$.

Now we look at classical groups with forms, i.e., symplectic, unitary and orthogonal groups. The next lemma is our main tool in inspecting these groups.

\begin{lem} \label{non singolare}
Let $V$ be a vector space over $\mathbb{F}_q$ of dimension $n$ and $f$ a nondegenerate alternating, sesquilinear or symmetric bilinear form on $V$. Let $x$ be an endomorphism of $V$ such that for all $v,w \in V$, $f(v,w)=0$ if and only if $f(v^x,w^x)=0$. Moreover let $W$ be an $\langle x \rangle$-invariant subspace of dimension $d$. Then there is a nonsingular $\langle x \rangle$-invariant subspace of dimension $k$, with $d \leq k \leq 2d$.
\end{lem}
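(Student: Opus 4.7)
The plan is to let $R := W \cap W^\perp$ be the radical of $f|_W$ and enlarge $W$ by adjoining an $\langle x \rangle$-invariant complement of $R^\perp$ in $V$. First I note that, because $W$ is $\langle x \rangle$-invariant and $x$ preserves orthogonality, the subspaces $W^\perp$, $R$ and $R^\perp$ are all $\langle x \rangle$-invariant. If $R = 0$ then $f|_W$ is already nondegenerate and the conclusion holds with $U = W$ and $k = d$, so from now on I assume $r := \dim R > 0$.

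The crux of the argument is to produce an $\langle x \rangle$-invariant complement $S$ of $R^\perp$ in $V$. Such a complement exists whenever the $\langle x \rangle$-action on $V$ is semisimple; in the setting where the lemma will be applied (an element $x$ of order $3$ on a space over $\mathbb{F}_q$ with $q$ coprime to $3$) this is guaranteed by Maschke's theorem. Once $S$ is in hand, $\dim S = r$, and from $V = S \oplus R^\perp$ I derive (by taking orthogonals and using that $f$ is nondegenerate on $V$) that $R \cap S^\perp = 0$, so the pairing $R \times S \to \mathbb{F}_q$ induced by $f$ is nondegenerate on both sides.

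I will then set $U := W + S$. Since $R \subseteq W^\perp$ forces $W \subseteq R^\perp$, we have $W \cap S \subseteq R^\perp \cap S = 0$, hence $\dim U = d + r \leq 2d$, and $U$ is $\langle x \rangle$-invariant by construction. To verify nonsingularity, take $v \in U \cap U^\perp$ and decompose $v = w + s$ with $w \in W$, $s \in S$. From $v \perp R$ and $w \in R^\perp$ one forces $s \in R^\perp \cap S = 0$, so $v = w \in W \cap U^\perp \subseteq W \cap W^\perp = R$; then $v \in R$ satisfies $v \perp S$, and the nondegeneracy of the pairing on $R \times S$ gives $v = 0$.

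The principal obstacle is the existence of the invariant complement $S$: it genuinely fails for non-semisimple $\langle x \rangle$-actions in the defining characteristic (for instance, a transvection in characteristic $2$ gives a $W$ of dimension $1$ with no nonsingular invariant overspace of dimension $\leq 2$), and the lemma is therefore implicitly being applied in a regime where Maschke supplies the required decomposition. The remainder of the argument is linear-algebraic bookkeeping mimicking the classical Witt construction of a hyperbolic complement to a totally isotropic subspace.
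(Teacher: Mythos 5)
Your proof is correct and is essentially the paper's own argument: since $R^\perp=(W\cap W^\perp)^\perp=W+W^\perp$ by nondegeneracy of $f$ on $V$, your Maschke-supplied complement $S$ of $R^\perp$ is exactly the paper's $\langle x\rangle$-invariant complement of $\tilde{W}=W+W^\perp$, and your $U=W+S$ is the paper's nonsingular subspace $Y$ (the verification of nonsingularity is organized slightly differently but amounts to the same computation). Your observation that semisimplicity of the $\langle x\rangle$-action is an unstated hypothesis is also accurate --- the paper's proof invokes Maschke's theorem in the same way, and the lemma is only ever applied with $|x|$ coprime to the characteristic.
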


\begin{proof}
The subspace $\tilde{W}=W+W^\perp$ is $\langle x \rangle$-invariant. 

Let $U$ be an $\langle x \rangle$-invariant complement of $\tilde{W}$ in $V$, whose existence is guaranteed by Maschke's theorem. We observe that 
\begin{equation*}
\begin{split}
\dim (U) &= \dim (V) - \dim (\tilde{W}) = \\ &=\dim (V) - (\dim(W)+\dim(W^\perp)-\dim(W \cap W^\perp))= \\ &= \dim (W \cap W^\perp)
\end{split}
\end{equation*}
We now consider $Y:=W \oplus U$. Note that $Y$ is $\langle x \rangle$-invariant and of dimension $k$, with $d \leq k \leq 2d$.  We show that $Y$ is nonsingular. Let $W=W_0 \oplus W_1$ with $W_0=W \cap W^\perp$. 

We have $Y^\perp=W^\perp \cap U^\perp$ and $V=W^\perp \oplus (W_1 \oplus U)$. 

Let $v \in Y \cap Y^\perp$. Then $v=w_0+w_1+u$ with $w_0 \in W_0$, $w_1 \in W_1$ and $u \in U$. Since $v$ and $w_0$ belong to $W^\perp$, we have $w_1+u \in W^\perp \cup (W_1 \oplus U)$ and so $w_1+u=0$. Then $v=w_0 \in W \cap W^\perp \cap U^\perp$, and so
\[
\langle v \rangle^\perp \geq W + W^\perp + U=V 
\]
which implies $v=0$, that is $Y$ is nonsingular.
\end{proof}

We can now prove Proposition \ref{tre un sesto almost} when $L$ is a symplectic, unitary or orthogonal group on $\mathbb{F}_q$, $q<8$. In this case an element $x$ of order $3$ in $Aut(L)$ is inside $L$ unless $L=PSU_n(q)$ and $x$ comes from an element in $GU_n(q)$, or $L=P\Omega_8^+(q)$. Suppose we are not in the latter case, which will be treated later.

Let $\Delta$ be one of the following groups: 
\[
GU_n(q),\ Sp_n(q),\ O_n^\pm(q). 
\]
As described in \cite{kleidmanliebeck:sbgpstrctclas}, the stabilizer in $\Delta$ of a decomposition 
\begin{equation} \label{dec ort}
V=W \perp U
\end{equation}
is the direct product $\Delta(W) \times \Delta(U)$ (the sign of the orthogonal group can change). Moreover, let $x$ be a $3$-element in $\Delta$, and $G=S\Delta \langle x \rangle$ where $S\Delta$ is the quasi-simple group contained in $\Delta$. If $x$ stabilizes a decomposition (\ref{dec ort}), without centralizing $W$, we have that $x=x_1x_2 \in \Delta(W) \times \Delta(U)$, $S\Delta(W)$ is stabilized by $x,x_1$ and $x_1$ does not centralize it. Then
\[
\spr_G(x) \leq \spr_{S\Delta(W)\langle x_1 \rangle} (x_1).
\] 

Our strategy will be to find a nonsingular subspace of small dimension $W$, not centralized by $x$, such that $PS\Delta(W)$ is not solvable and then to check the finitely many resulting groups with GAP (\cite{GAP4}).

Let $L \simeq PSU_n(q)$. We prove the $q \neq 2$ case first. As in the proof of Proposition \ref{riduzione PSL 2 3}, we find an invariant subspace $W$ of dimension $2$ or $3$ on which the action of $x$ is not scalar. By Lemma \ref{non singolare} we find an $\langle x \rangle$-invariant nonsingular subspace $Y$ of dimension $2 \leq d \leq 6$. 
Then we only need to check the groups $PSU_d(q)$, with $2 \leq d \leq 6$, and their extensions by a $3$-element. 
We use some bounds on the size of conjugacy classes of semisimple elements contained in \cite{burness:fprclassicalII}.
Namely, it is enough to use the inequality 
\begin{equation} \label{sp lasca}
\spr_G(x) \leq \frac{|P|}{|x^G|}
\end{equation}
in order to get ratios smaller than $1/6$.

Let $q=2$. Again, using Lemma \ref{non singolare}, we take an $\langle x \rangle$-invariant nonsingular subspace $Y$ of dimension $2 \leq d \leq 6$, on which the action of $x$ is not scalar. If $4 \leq d \leq 6$ we can check with GAP that Proposition \ref{tre un sesto almost} holds for $L=PSU_d(2)$. If instead $d \in \lbrace 2,3 \rbrace$, we have to consider a subspace of greater dimension, since $PSU_d(2)$ is solvable. We have two cases, whether $Y$ is or is not an eigenspace for $x$. \\
If not, we can repeat the argument of Lemma \ref{non singolare} to find a nonsingular subspace $U \leq Y^\perp$ of dimension $2 \leq d' \leq 6$, on which $x$ does not acts as a scalar. If $4 \leq d' \leq 6$ we can replace $Y$ with $U$. If on the contrary $d' \in \lbrace 2,3 \rbrace$ we consider the nonsingular subspace $Y+U$, whose dimension is between $4$ and $6$.\\
This proves Proposition \ref{tre un sesto almost} when $L \simeq PSU_n(q)$.

When $L \simeq PSp_n(q)$, the argument is similar to the one used for $PSU_n(q)$.

If $L \simeq P\Omega_n (q) $, with $q \in \lbrace 5,7 \rbrace$, the argument used for $PSU$ works, since the quadratic form defining $L$ is equivalent to the related bilinear form.\\
In characteristic $2$ we are only interested with $P\Omega_{2n}^\pm$, since $P\Omega_{2n+1}(2^r) \simeq PSp_{2n}(2^r)$.\\
When the dimension is even, calling $Q$ the quadratic form and $f$ the related bilinear symmetric form, we have that $Rad(f)=0$ if and only if $Rad(Q)=0$.\\
Thus Lemma \ref{non singolare} still gives a subspace $Y$, nonsingular with respect to the bilinear form and so also to the quadratic form (since $Rad(Q) \subseteq Rad(f)$). This allows to threat the inner and diagonal elements of the orthogonal groups as we did for the unitary and the symplectic case. \\
The only elements to be checked are hence the outer automorphisms of $L=P\Omega_8^+(q)$ with $q \leq 7$ (this is the only case having graph automorphisms involved).\\ 
In \cite{burness:fprclassicalII}, Lemma 3.48, the following bound is given for a $3$-element $x \in Aut(L) \setminus L$: 
\[
|x^L| \geq \frac{1}{8} q^{14}.
\]
Set $G=L\langle x \rangle$. For $q \in \lbrace 2,4,5,7 \rbrace$ the $3$-Sylow subgroup $P$ in $P\Omega_8^+(q)$ has order $243$ and so
\[
\spr_G(x) = \frac{|x^G \cap P\langle x \rangle|}{|x^G|} \leq \frac{3|P|}{|x^L|} \leq \frac{8 \cdot 729}{q^{14}}
\]
which is less than $1/6$ for $q \geq 4$.\\
As for $P\Omega_8^+(2)$, we check with GAP (\cite{GAP4}).

\subsection{Finite groups of Lie type: $q = 3$}

Again the case of an outer automorphism of $P\Omega_8^+(3)$ can be checked with GAP (\cite{GAP4}). We thus only have to deal with elements inside $L$. 

In \cite{gonshaw:unipclas}, a description of unipotent classes in classical groups is given in terms of Jordan blocks. Since the element $x$ has order $3$, its Jordan blocks have dimensions between $1$ and $3$. 

In the linear case it is enough to take the subspace related to a Jordan block of dimension $3$, two Jordan blocks of order $2$, or one of order $1$ and the other of order $2$. We can use GAP to check $PSL_3(3)$ and $PSL_4(3)$.

In the other cases call $J_i$ a Jordan block of dimension $i \in \lbrace 1,2,3 \rbrace$, and if $r_i$ is the multiplicity of that block we write the Jordan form of $x$ as
\begin{equation}\label{jordan}
r_1J_1+r_2J_2+r_3J_3.
\end{equation}

Following the description of \cite{gonshaw:unipclas}, we describe what happens when $L=PSU_n(3)$. Proposition 2.2 in \cite{gonshaw:unipclas} tells that for all $i \in \lbrace 1,2,3 \rbrace $ there exist $r_i$ subspaces of $V$ of dimension $i$ which are pairwise orthogonal, $\langle x \rangle$-invariant and such that the sesquilinear form is nonsingular on all of them. Choosing a sum of these subspaces on which $x$ acts nontrivially, we get a nonsingular subspace $W$ of dimension $d \in \lbrace 3,4 \rbrace$ such that 
$x|_W \neq id_W$. $W^\perp$ is $\langle x \rangle$-invariant too and so 
\[
\spr_L(x) \leq \spr_{SU_d(3)}(y)
\]
for some $3$-element $y$.

In the symplectic and orthogonal case things go likewise, but $\dim (W) \in \lbrace 4,5,6 \rbrace$.

\subsection{Sporadic groups}

Looking at the ATLAS of finite simple groups, it comes out that if $L$ is each one of the twentysix sporadic simple groups, $P \in Syl_3(L)$, it is true the loose inequality
\[
\spr_L(x) \leq \frac{|P|}{|x^L|} \leq \frac{1}{6}.
\]
The almost simple case is trivial with sporadic groups, since for such an $L$ we have $|Out(L)| \in \lbrace 1,2 \rbrace$.

This completes the proof of Proposition \ref{tre un sesto almost}, which, together with Lemma \ref{riduzione tre un sesto CFSG} and Proposition \ref{tre un sesto ciclo}, prove Proposition \ref{tre un sesto} as well.	

\section{The number of $2$-elements in nonsolvable groups} \label{section frob}

In this section we focus on \textit{Step 2} of the proof of Theorem \ref{unsesto}, as described at the end of section \ref{sketch}. 

We now prove the fact that the sum of $\spr_G(x)$ over all the $p$-elements of $G$ gives the cardinality of a Sylow $p$-subgroup.

\begin{lem} \label{somma dei subnormalizzatori dei p-elementi}
Let $p$ be a prime dividing the order of a finite group $G$ and $P \in Syl_p(G)$. Moreover let $\mathfrak{U}_p(G)$ be the set of the $p$-elements in $G$. Then
\[
\sum_{x \in \mathfrak{U}_p(G)} |S_G(x)| =|P||G|.
\]
\end{lem}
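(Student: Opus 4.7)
The plan is to combine Casolo's formula (Theorem \ref{form subnor theorem}) with a double-counting argument. For every $p$-element $x$, Theorem \ref{form subnor theorem} gives $|S_G(x)| = \lambda_G(x)\,|N_G(P)|$, where $\lambda_G(x)$ counts the Sylow $p$-subgroups containing $x$. Factoring out the constant $|N_G(P)|$, the identity to be proved reduces to
\[
\sum_{x \in \mathfrak{U}_p(G)} \lambda_G(x) \;=\; n_p(G)\,|P|,
\]
since $|N_G(P)|\cdot n_p(G) = |G|$ by the orbit-stabilizer theorem.

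To establish this last equality, I would double count the set
\[
\Omega = \bigl\{ (x,Q) \in \mathfrak{U}_p(G) \times \mathrm{Syl}_p(G) \;\big|\; x \in Q \bigr\}.
\]
Counting first by the $x$-coordinate, each $x \in \mathfrak{U}_p(G)$ contributes exactly the number of Sylow $p$-subgroups that contain it, namely $\lambda_G(x)$, giving $|\Omega| = \sum_{x \in \mathfrak{U}_p(G)} \lambda_G(x)$. Counting instead by the $Q$-coordinate, for each Sylow $p$-subgroup $Q$ every one of its $|Q| = |P|$ elements is a $p$-element of $G$, so $Q$ contributes $|P|$, yielding $|\Omega| = n_p(G)\,|P|$.

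Equating the two counts and multiplying by $|N_G(P)|$ gives the stated formula. There is no real obstacle here: the argument is essentially a one-line double count once Casolo's formula is invoked, and the only thing to be careful about is that $\mathfrak{U}_p(G)$ is precisely the union of the Sylow $p$-subgroups, so that membership of $x$ in some Sylow $p$-subgroup is automatic for $x \in \mathfrak{U}_p(G)$ and the pairs in $\Omega$ are correctly enumerated in both directions.
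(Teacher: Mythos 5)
Your proof is correct. It rests on the same engine as the paper's --- Casolo's formula (Theorem \ref{form subnor theorem}) plus a double count --- but it uses the other factorization of the subnormalizer: you take $|S_G(x)|=\lambda_G(x)\,|N_G(P)|$ and count incidences $(x,Q)$ with $x\in Q\in Syl_p(G)$ directly, so that each Sylow subgroup contributes $|P|$ and the total is $n_p(G)\,|P|$; multiplying by $|N_G(P)|=|G|/n_p(G)$ finishes it. The paper instead uses $|S_G(x)|=\alpha_G(x)\,|C_G(x)|$ (via Remark \ref{ossalfa}), sums over the conjugacy classes $\mathcal{K}_1,\dots,\mathcal{K}_n$ of $p$-elements, and concludes from $\sum_i\alpha_G(x_i)=|P|$, i.e.\ it performs the dual count in which the incidence set is sliced by a single fixed Sylow subgroup and the conjugation action. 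The two arguments are mirror images through the two sides of equation (\ref{formulasubnor}); yours is arguably slightly more direct since it avoids introducing the class representatives $x_i$, while the paper's version keeps the computation entirely inside the $\alpha_G$/centralizer language it reuses elsewhere. Your closing remark that $\mathfrak{U}_p(G)$ is exactly the union of the Sylow $p$-subgroups is indeed the only point needing care, and you have addressed it.
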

\begin{proof}
Let $\mathcal{K}_1, \dots \mathcal{K}_n$ be the conjugacy classes of $p$-elements in $G$ and, for every $i$, choose an element $x_i \in P \cap \mathcal{K}_i$. Then by Theorem \ref{form subnor theorem} and Remark \ref{ossalfa}
\begin{align*}
\sum_{x \in \mathfrak{U}_p(G)} |S_G(x)| & = \sum_{i=1}^n |K_i| \alpha_G(x_i)|C_G(\langle x_i \rangle)| \\
&= \sum_{i=1}^n [G:C_G(x_i)] = |G| \sum_{i=1}^n \alpha_G(x_i) = |G||P|.
\end{align*}
The last equality holds since every element in $P$ is conjugate to one of the elements $x_i$.
\end{proof}

As shown in the outline, this allow us to deal with the ratio $|\mathfrak{U}_2(G)|/|G|_2$ for nonsolvable groups. This ratio, when $2$ is replaced by any prime $p$, is studied more in general in \cite{gheri:pelem}.

We collect some information about $\mathfrak{U}_p(G)$ in the following lemma, whose proof is straightforward.

\begin{lem} \label{Frob bischerate}
Let $P$ be a Sylow $p$-subgroup of $G$. 
\begin{itemize}
\item[i)] If $P \leq H \leq G$, then 
\[
\frac{|\mathfrak{U}_2(G)|}{|G|_2} \geq \frac{|\mathfrak{U}_2(H)|}{|H|_2}.
\]
\item[ii)] If $N$ is a normal subgroup of $G$, then
\[
|\mathfrak{U}_p\left( G/N \right)|/|(G/N)|_p \leq |\mathfrak{U}_p(G)|/|G|_p.
\]
Also, if $N \leq Z(G)$, then equality occurs.
\item[iii)] If $H$ is a subgroup normalized by a $p$-element $g$ and $Q$ is a Sylow $p$-subgroup of $H$, then
\[
|\mathfrak{U}_p(Hg)| \geq |Q|.
\]
\end{itemize}
\end{lem}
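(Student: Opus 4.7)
The plan is to verify each of the three parts directly, in the spirit of the author's remark that the proof is straightforward; the only points requiring care are a couple of Sylow-theoretic fibre counts in part (ii).

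For part (i), the observation is that $P \leq H$ forces $P$ to be a Sylow $p$-subgroup of $H$ as well (by Lagrange applied to $H$), so $|G|_p = |H|_p = |P|$. Every $p$-element of $H$ is obviously a $p$-element of $G$, i.e.\ $\mathfrak{U}_p(H) \subseteq \mathfrak{U}_p(G)$; dividing this cardinality inequality by $|P|$ immediately yields the claim.

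For part (ii), I would analyse the quotient map $\pi:G \to G/N$ fibre by fibre. The image of a $p$-element is a $p$-element, so it suffices to show that each fibre over an element of $\mathfrak{U}_p(G/N)$ contains at least $|N|_p$ elements of $\mathfrak{U}_p(G)$. Given $yN \in \mathfrak{U}_p(G/N)$, set $K := \pi^{-1}(\langle yN \rangle)$, so $N \unlhd K$ with $K/N$ a cyclic $p$-group. A Sylow $p$-subgroup $Q^*$ of $K$ then surjects onto $K/N$, which forces $|Q^* \cap N| = |N|_p$ and $yN \cap Q^* = (Q^* \cap N)g'$ for any $g' \in yN \cap Q^*$; this supplies $|N|_p$ genuine $p$-elements of $G$ in the coset $yN$. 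Summing over $\mathfrak{U}_p(G/N)$ gives $|\mathfrak{U}_p(G)| \geq |N|_p \cdot |\mathfrak{U}_p(G/N)|$, and the ratio inequality follows upon dividing by $|G|_p = |N|_p \cdot |G/N|_p$. For equality when $N \leq Z(G)$, I would use that $N$ being central (hence abelian) gives $N = N_p \times N_{p'}$, and that every $g \in G$ has a unique commuting decomposition $g = g_p g_{p'}$; then $gN \in \mathfrak{U}_p(G/N)$ is equivalent to $g_{p'} \in N_{p'}$, and the $p$-elements in the coset $gN$ are exactly $\{g_p n : n \in N_p\}$, so the fibre count is exactly $|N|_p$ and equality holds throughout.

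For part (iii), set $K := H \langle g \rangle$. Since $g$ normalizes $H$, $H$ is normal in $K$ and $K/H$ is a cyclic $p$-group; hence any Sylow $p$-subgroup $P^*$ of $K$ satisfies $K = HP^*$, and $H \cap P^*$ is a Sylow $p$-subgroup of $H$, so $|H \cap P^*| = |Q|$. The equality $K = HP^*$ means every coset of $H$ in $K$ meets $P^*$; in particular $Hg = Hg'$ for some $g' \in P^*$, and then $Hg \cap P^* = (H \cap P^*)g'$ has exactly $|Q|$ elements, each of them a $p$-element of $G$. I do not expect any serious obstacle; the only slightly non-mechanical step is the equality case of part (ii), where the centrality of $N$ is used precisely so that the primary decomposition of $G$-elements interacts with $N$ cleanly enough to turn the inequality into an equality.
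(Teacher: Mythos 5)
Your proof is correct. The paper itself gives no argument for this lemma (it is dismissed as ``straightforward''), and your fibre-by-fibre Sylow counts for parts (ii) and (iii) --- in particular the observations that a Sylow $p$-subgroup of $K=\pi^{-1}(\langle yN\rangle)$ meets the coset $yN$ in exactly $|N|_p$ elements, and that $K=HP^*$ forces $Hg\cap P^*=(H\cap P^*)g'$ --- together with the primary-decomposition argument for the equality case when $N\leq Z(G)$, are exactly the details the author is implicitly relying on.
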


The next lemma will be useful to deal with groups in $\mathfrak{M}_{ns}$. 

\begin{lem} \label{Omega in coset prodotto diretto}
If $N = N_1 \times \dots \times N_t \unlhd G$ and $g \in \mathfrak{U}_p(G)$ is an element such that $g \in N_G(N_i)$, $\forall i \in \lbrace 1, \dots, t \rbrace$, we have
\[
\left| \mathfrak{U}_p(Ng)\right| = \prod_{i=1}^t|\mathfrak{U}_p(N_ig)|.
\]
In particular, 
\[
\left| \mathfrak{U}_p(N)\right| = \prod_{i=1}^t|\mathfrak{U}_p(N_i)|.
\]
\end{lem}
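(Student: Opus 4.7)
The plan is to exhibit an explicit bijection
\[
\Phi : \mathfrak{U}_p(Ng) \longrightarrow \prod_{i=1}^{t} \mathfrak{U}_p(N_i g),
\]
from which the product formula follows at once. As a candidate, observe that every element of $Ng$ is uniquely of the form $ng$ with $n \in N$, and that $n$ in turn has a unique decomposition $n = n_1 \cdots n_t$ with $n_i \in N_i$; thus the assignment $ng \mapsto (n_1 g, \dots, n_t g)$ gives a natural set-level bijection between $Ng$ and $\prod_i N_i g$. All the work will consist in checking that this map restricts to a bijection between $p$-elements on the two sides.

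The first step is to write down a usable formula for powers of $ng$. Since $g$ normalizes each $N_i$, conjugation by $g$ induces an automorphism $\sigma$ of $N$ that preserves each factor. An easy induction (using that elements of distinct $N_j$ commute pairwise) then yields, for every $m \geq 1$,
\[
(ng)^m = \Bigl( \prod_{j=1}^{t} a_j(m) \Bigr) g^m, \qquad (n_j g)^m = a_j(m)\, g^m,
\]
where $a_j(m) := n_j \sigma(n_j) \sigma^2(n_j) \cdots \sigma^{m-1}(n_j) \in N_j$.

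The crucial step is then to bring in the hypothesis that $g$ is itself a $p$-element. I would choose $K$ so large that $g^{p^K}=1$, and simultaneously either $(ng)^{p^K}=1$ (for the forward direction) or $(n_j g)^{p^K}=1$ for every $j$ (for the reverse direction). With this choice the two displayed identities collapse to $(ng)^{p^K} = a_1(p^K)\cdots a_t(p^K)$ and $(n_j g)^{p^K} = a_j(p^K)$. Because the $a_j(p^K)$ sit in independent direct factors $N_j$, the product on the left is trivial if and only if every individual $a_j(p^K)$ is trivial; hence $ng$ is a $p$-element precisely when each $n_j g$ is, and $\Phi$ restricts to the sought bijection.

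The ``in particular'' statement is the special case $g=1$. No real obstacle is expected; the only subtle point is remembering that one really needs $g$ to be a $p$-element (and not merely to normalize each $N_i$), so that $g^{p^K}$ can be killed for $K$ large, thereby decoupling the $N_j$-components of the power formula cleanly.
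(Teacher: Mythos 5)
Your proof is correct and follows essentially the same route as the paper: the same bijection $ng \mapsto (n_1g,\dots,n_tg)$ together with the twisted power formula $(ng)^m=\bigl(\prod_j a_j(m)\bigr)g^m$, using the independence of the direct factors to decouple the components. The only (harmless) difference is that you take a single exponent $p^K$ large enough to kill everything at once, where the paper splits into the cases $l\geq r$ and $l<r$ and reduces to $t=2$ by induction.
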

\begin{proof}
It is enough to prove the lemma for $t=2$, as the general case follows from it with an easy induction.
Arguing by induction on $s \in \mathbb{N}$ we see that
\[
(xy)^s=xx^{y^{-1}}x^{y^{-2}} \cdots x^{y^{-(s-1)}}y^s,
\]
for all $x,y \in G$.\\
Let $|g|=p^r$ and $a \in N_1,b \in N_2$ such that $|(a,b)g|=p^l$. If $l \geq r$
\begin{equation*}
\begin{split}
&1=((a,b)g)^{p^l} =(a,b)(a,b)^{g^{-1}}(a,b)^{g^{-2}} \cdots (a,b)^{g^{-(p^l-1)}}g^{p^l}=\\
&=(aa^{g^{-1}} \cdots a^{g^{-(p^l-1)}},bb^{g^{-1}} \cdots b^{g^{-(p^l-1)}}).
\end{split}
\end{equation*}
and so 
\begin{equation*}
\begin{split}
aa^{g^{-1}} \cdots a^{g^{-(p^l-1)}}=1 \\
bb^{g^{-1}} \cdots b^{g^{-(p^l-1)}}=1,
\end{split}
\end{equation*}
that is, $|ag|$ and $|bg|$ divide $p^l$.

If on the contrary $l<r$, we have
\begin{equation*}
1=\left( ((a,b)g)^{p^l} \right)^{p^{r-l}} = \left( aa^{g^{-1}} \cdots a^{g^{-(p^{r-1})}},bb^{g^{-1}} \cdots b^{g^{-(p^{r-1})}} \right),
\end{equation*}
and so again $|ag|$ and $|bg|$ divide $p^l$.

The following map is then well defined and injective
\begin{align*}
\Phi: \mathfrak{U}_p((N_1 \times N_2)g) & \rightarrow \mathfrak{U}_p(N_1g) \times \mathfrak{U}_p(N_2g), \\ 
(a,b)g & \mapsto (ag,bg).
\end{align*}

As for the surjectivity, we observe that if $|ag|=p^{l_1}$ and $|bg|=p^{l_2}$, then, setting $l=\max \lbrace r,l_1,l_2 \rbrace$, we have
\[
((a,b)g)^{p^l}=(aa^{g^{-1}} \cdots a^{g^{-(p^l-1)}},bb^{g^{-1}} \cdots b^{g^{-(p^l-1)}})=1.
\]
\end{proof}	

Now let $G$ be a group in $\mathfrak{M}_{ns}$ and let $N$ be the unique minimal normal subgroup of $G$. Then $N$ is the direct product
\[
L_1 \times \dots \times L_k
\] 
of $k$ copies of the same nonabelian simple group $L$ (so each $L_i \simeq L$). Moreover $C_G(N)=1$, $G \lesssim Aut(N) \left( \simeq Aut(L) \wr S_k \right)$.

We shall prove that $|\mathfrak{U}_2(G)|/|G|_2  \geq 6$.

Using part (i) of Lemma \ref{Frob bischerate}, it is enough to prove Proposition \ref{due un sesto} in the case $G=NP$ with $P$ a Sylow $2$-subgroup of $G$.

As we said before, our arguments fail for a finite number of groups, which we treat separately in the following lemma.

\begin{lem} \label{frob A5}
Let $G \in \mathfrak{M}_{ns}$ and $N$ be its minimal normal subgroup. If $N$ is isomorphic to one of the following groups
\[
A_5, A_5 \times A_5, PSL_2(7),PSL_2(16)
\] 
then $\spr(G) \leq 1/6$.
\end{lem}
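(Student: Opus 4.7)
The plan is to dispatch each of the four socles by finite case analysis: for each $N$ in the list, the set of $G \in \mathfrak{M}_{ns}$ with socle $N$ is finite (a subset of the lattice $\{G : N \leq G \leq \mathrm{Aut}(N)\}$, with $G$ acting transitively on the simple factors of $N$ so that $N$ remains the \emph{unique} minimal normal subgroup). Concretely I would enumerate: for $N = A_5$, $G \in \{A_5, S_5\}$; for $N = PSL_2(7)$, $G \in \{PSL_2(7), PGL_2(7)\}$; for $N = PSL_2(16)$, $G \in \{PSL_2(16), PSL_2(16).2, PSL_2(16).4\}$; and for $N = A_5 \times A_5$, the overgroups inside $S_5 \wr S_2$ that swap the two factors. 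Moreover, by Lemma \ref{Frob bischerate}(i) I may further restrict to $G = NP$ with $P \in \mathrm{Syl}_2(G)$, which prunes the list in each case.

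For each remaining $G$, I would compute $\spr(G)$ directly using GAP. Recall
\[
\spr(G) = \frac{1}{|G|^2} \sum_{x \in G} |S_G(x)|,
\]
so it suffices to compute $|S_G(x)|$ for one $x$ in each conjugacy class and weight by the class size. The subnormalizer $S_G(x) = \{g \in G : \langle x\rangle \sn \langle x, g\rangle\}$ is a direct GAP computation in these small groups: test, for every $g \in G$, whether $\langle x \rangle$ becomes subnormal in $\langle x, g\rangle$ via iterated normalizers. The key computational sanity check is $\spr(A_5) = 1/6$, which is already recorded in the introduction; this simultaneously handles $G = A_5$ and shows the bound is sharp.

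As an alternative, more structured route that avoids handling arbitrary elements, one can plug each $G$ into the estimate (\ref{arg gen un sesto}). The first summand collapses to $|P|/|G|$ by Lemma \ref{somma dei subnormalizzatori dei p-elementi}; the third is bounded by $1/6$ trivially; and the middle summand requires only the fixed-point ratios $\spr_G(x_3)$ of elements of order $3$ on the set of Sylow $3$-subgroups, a small combinatorial count in every one of the listed groups. Combining these three easy numbers and checking that they sum to at most $1/6$ would give the lemma without computing any non-prime-power subnormalizers.

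The only genuine obstacle is the size of the group in the $A_5 \times A_5$ case, where $|\mathrm{Aut}(N)| = 28800$; but even the maximal extension is well within GAP's reach, and after the reduction $G = NP$ the relevant orders are considerably smaller. No new theoretical input beyond the statements already in the paper is needed: the lemma is precisely the base case into which the general machinery of Sections 2–4 feeds the few groups where the asymptotic bound $|\mathfrak{U}_2(G)|/|G|_2 \geq 6$ fails.
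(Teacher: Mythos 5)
Your proposal is correct and, for the three simple socles, is exactly what the paper does: enumerate the finitely many $G$ with $N\leq G\leq Aut(N)$ and check each by direct GAP computation. The one genuine difference is the case $N\simeq A_5\times A_5$, where the paper does not compute $\spr(G)$ at all: it only verifies by machine that $|\mathfrak{U}_2(G)|/|G|_2>6$ for every relevant extension, so that these groups re-enter the general pipeline of Sections 2--4 (Step 1 plus Lemma \ref{somma dei subnormalizzatori dei p-elementi}); this is lighter than your direct computation of all subnormalizers in a group of order up to $28800$, though your version is certainly feasible. Two small cautions. First, your appeal to Lemma \ref{Frob bischerate}(i) to ``restrict to $G=NP$'' is not a valid reduction for a direct computation of $\spr(G)$ --- that lemma compares ratios of $2$-elements, not the averages $\spr(G)$ and $\spr(NP)$; it is harmless here only because $Out(N)$ is a $2$-group in all four cases, so $NP=G$ and nothing is actually pruned. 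Second, your alternative route through (\ref{arg gen un sesto}) does work, but only just: for $G=A_5$ the three terms are $4/60$, $2/60$ and $4/60$ (the order-$3$ elements must be counted in $\mathfrak{V}$, with $\spr_G(x_3)=1/10$), summing to exactly $1/6$, so the phrase ``the third is bounded by $1/6$ trivially'' must be read as ``the third summand equals $|G\setminus(\mathfrak{U}_2(G)\cup\mathfrak{V})|/(6|G|)$, a computable number'' --- bounding it crudely by $1/6$ and adding the positive first term would overshoot.
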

\begin{proof}
Since $C_G(N)=1$, we have that $G$ is isomorphic to a subgroup of $Aut(N)$. If $N$ is simple, then $G$ is an almost simple group with socle $N$, and these cases can be checked by direct calculation with GAP (\cite{GAP4}). 

If $N \simeq A_5 \times A_5$, then, since $C_G(N)=1$, $G$ is isomorphic to a subgroup of $Aut(N)$, which is an extension of $N$ of index at most $8$. Again by calculation one can see that the ratio $|\mathfrak{U}_2(G)|/|G|_2$ of any such group is greater than $6$.
\end{proof}

To be clearer we can now give the exact statement that is proved in the rest of the section.

\begin{prop} \label{due un sesto}
Let $G \in \mathfrak{M}_{ns}$ and $N$ be its minimal normal subgroup. If $N$ is not isomorphic to one of the groups in Lemma \ref{frob A5}, then 
\[
|\mathfrak{U}_2(G)|/|G|_2 \geq 6.
\]
\end{prop}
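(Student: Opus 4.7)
The plan is to combine the reduction lemmas of Section \ref{section frob} with a case analysis on the isomorphism type of the simple factor $L$ of the socle $N = L_1\times\cdots\times L_k$.

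First, by Lemma \ref{Frob bischerate}(i) applied to $H = NP$ (where $P\in\mathrm{Syl}_2(G)$), one may assume $G = NP$; in particular $G/N$ is a $2$-group.

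For $k\geq 2$ the strategy is to use Lemma \ref{Omega in coset prodotto diretto}. Let $P$ act on $\{L_1,\dots,L_k\}$ and, for every $2$-element $g\in G$, decompose $N$ as the product over the $\langle g\rangle$-orbits on the simple factors. Writing $L_\mathcal{O}$ for the product of the factors in an orbit $\mathcal{O}$, this gives
\[
|\mathfrak{U}_2(Ng)| \;=\; \prod_{\mathcal{O}}|\mathfrak{U}_2(L_\mathcal{O}\,g)|,
\]
and Lemma \ref{Frob bischerate}(iii) bounds each factor below by $|L|_2^{|\mathcal{O}|}$. Combining the identity-coset contribution $|\mathfrak{U}_2(N)| = |\mathfrak{U}_2(L)|^k$ (Lemma \ref{Omega in coset prodotto diretto} with $g=1$) with the trivial estimate on every other coset yields
\[
\frac{|\mathfrak{U}_2(G)|}{|G|_2} \;\geq\; 1 + \frac{(|\mathfrak{U}_2(L)|/|L|_2)^k - 1}{|G/N|}.
\]
Assuming the almost simple bound $|\mathfrak{U}_2(L)|/|L|_2\geq 6$ (to be proved below for $L\notin\{A_5,PSL_2(7),PSL_2(16)\}$) and using $|G/N|\leq |\mathrm{Out}(L)|^k\,k!$, a finer version of this inequality, in which one also applies Lemma \ref{Omega in coset prodotto diretto} inside outer cosets, gives the bound $6$ for every pair $(L,k)$ with $k\geq 2$, with the single exception $L=A_5$, $k=2$, which is precisely the excluded case $N = A_5\times A_5$.

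For $k=1$ the group $G$ is almost simple with socle $L$, and one proceeds by family. For $L = A_n$ with $n\geq 6$, explicit enumeration of $2$-elements (involutions of cycle type $(2^{2i},1^{n-4i})$ together with elements of order $4$) inside $A_n$ already yields ratios much larger than $6$; the extensions $S_n$ (and the three small extensions of $A_6$) only add $2$-elements via Lemma \ref{Frob bischerate}(iii). For $L$ of Lie type over $\mathbb{F}_q$, the known lower bounds on the sizes of involution classes (from, e.g., \cite{burness:fprclassicalII}) give $i(L)\geq 6|L|_2$ outside a short list of small-rank, small-$q$ cases, which are verified with GAP. The exclusions $PSL_2(7)$ and $PSL_2(16)$ appear exactly as the small cases where some almost simple extension dips below $6$. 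For $L$ sporadic, one reads the involution class sizes from the ATLAS. The main obstacle is exactly this Lie-type analysis for small $L$: one must count $2$-elements in outer cosets accurately enough to include every small Lie-type simple group except $PSL_2(7)$ and $PSL_2(16)$, a finite but delicate computation.
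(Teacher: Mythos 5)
There is a genuine gap in your treatment of the case $k\geq 2$. Your key estimate bounds the contribution of every coset $Ng$ with $g\neq 1$ by the trivial lower bound $|\mathfrak{U}_2(Ng)|\geq |L|_2^{k}$ coming from Lemma \ref{Frob bischerate}(iii) applied orbit by orbit, so that the whole burden of exceeding $6$ falls on the identity coset via the term $(|\mathfrak{U}_2(L)|/|L|_2)^k/|G/N|$. This fails badly when $|\mathfrak{U}_2(L)|/|L|_2$ is small compared with $|Out(L)|_2$. Concretely, for $L=A_5$ one has $|\mathfrak{U}_2(L)|/|L|_2=4$ and, taking $k=2^t$ with $G/N$ a full Sylow $2$-subgroup of $Out(L)\wr S_k$, $|G/N|=2^{2k-1}$, so your displayed inequality gives at most $1+(4^k-1)/2^{2k-1}<3$ for every such $k$. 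Thus the exceptional set is not just $(A_5,2)$: your bound fails for $N\simeq A_5^k$ for all $k=2^t$, and the promised ``finer version'' is exactly the nontrivial content that is missing, since applying Lemma \ref{Omega in coset prodotto diretto} inside outer cosets and then again invoking only Lemma \ref{Frob bischerate}(iii) on each orbit factor reproduces the same trivial bound.

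The missing idea is the paper's Proposition \ref{coset intrecciato ciclo} and Corollary \ref{coset intrecciato permutazione}: in a coset $Nv\sigma$ where $\sigma$ has an orbit of length $s\geq 2$, one exhibits a full $N$-conjugacy class of $2$-elements of a twisted-diagonal type, of size $|L|^{s}/|C_L(au)|$. Combining the theorem of Kimmerle--Lyons--Sandling--Teague that $|P_0|^2<|L|$ with the fact that $|L:C_L(x)|\geq 6$ for every nontrivial $x$ (indeed $\geq 10$ when $L=A_5$), each such coset is seen to contain at least $6\,|P\cap N|$ $2$-elements. This is what lets the outer cosets contribute the factor $6$ on their own (the term $\mathcal{B}$ in the paper), so that the identity-type cosets only need $\phi(L)^k\geq 6$ with $\phi(L)=|\mathfrak{U}_2(L)|/|Aut(L)|_2$ --- which is why exactly $A_5$, $A_5\times A_5$, $PSL_2(7)$, $PSL_2(16)$ are excluded. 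Your $k=1$ analysis is in the same spirit as the paper's bound on $\phi(L)$, but note that counting only involutions is not always enough (the paper uses products of $2$-power cycles for $A_n$, regular $2$-elements in odd characteristic, and Steinberg's count $|\mathfrak{U}_2(G^F)|=(|G^F|_2)^2$ in characteristic $2$); you should count all $2$-elements.
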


A group $G \in \mathfrak{M}_{ns}$ can be embedded in the wreath product $Aut(L) \wr S_k$. Let $B$ be the base of this wreath product, 
\[
B=Aut(L_1) \times \dots \times Aut(L_k).
\]
Set $K=B \cap G \unlhd G$, $\mathcal{T}_K$ a right transversal of $P \cap N$ in $P \cap K$ and $\mathcal{T}_G$ a right transversal of $P \cap N$ in $P$ such that $1 \in \mathcal{T}_{K} \subseteq \mathcal{T}_{G}$.

These are right transversals of $N$ in $K$ and $G$, respectively, and so
\[
G= K \ \dot{\cup} \dot{\bigcup_{g \in \mathcal{T}_{G} \setminus \mathcal{T}_{K}}} \left( Ng \right).
\]
Observing that 
\[
|P|=\frac{\left| P \right|}{\left| P \cap N \right| }\left|P \cap N \right| = \frac{\left| PN \right|}{\left| N \right| }\left|P \cap N \right|=\left| \frac{G}{N} \right| \left|P \cap N \right|
\]
and $|P|=\left| G/K \right| \left|P \cap K \right|$ in the same way, we get
\begin{equation} \label{stimaschifo}
\dfrac{|\mathfrak{U}_2(G)|}{|P|}  = \dfrac{|\mathfrak{U}_2(K)|}{|P|} + \dfrac{1}{|P|} \sum_{g \in \mathcal{T}_{G} \setminus \mathcal{T}_{K}} |\mathfrak{U}_2(Ng)| =  \mathcal{A} + \mathcal{B},
\end{equation}
where we set
\begin{equation} \label{A e B}
\begin{split}
\mathcal{A}=& \dfrac{|\mathfrak{U}_2(K)|}{\left| \frac{ G}{K }\right|  \left| P \cap K \right| }\\
\mathcal{B}=& \dfrac{1}{\left| \frac{G}{N}\right| \left|P \cap N \right| } \left( \sum_{g \in \mathcal{T}_{G} \setminus \mathcal{T}_{K}} \left| \mathfrak{U}_2(Ng) \right| \right).
\end{split}
\end{equation}

We now treat the two terms $\mathcal{A}$ and $\mathcal{B}$ separately. 

\subsection{A bound for $\mathcal{B}$}

First of all we observe that up to conjugation by an element of $Aut(N)$, we can take $P$ inside $Q \wr R$, the wreath product of a $Q \in Syl_2(Aut(L))$ and $R \in Syl_2(S_k)$. Set $P_0 = Q \cap L$, a Sylow $2$-subgroup of $L$.

The next proposition, and the following corollary, give a bound for any summand of $\mathcal{B}$, that is, for the number of $2$-elements in a coset of $N$ not contained in $K$.

\begin{prop} \label{coset intrecciato ciclo}
Suppose that $k=2^t$ and let $\sigma=(1,2,\dots,k) \in S_k$. Let, moreover, $v=\left( v_1, \dots, v_k \right) \in B$ such that $v\sigma \in P$. Then
\[
|\mathfrak{U}_2(Nv\sigma)| \geq \frac{|L|^k}{|C_L(au)|},
\]
where $a$ is any element of $P_0$ and $u=v_1v_2 \dots v_{k} \in Aut(L)$.
\end{prop}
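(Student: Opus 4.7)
The plan is to reduce counting $2$-elements in the coset $Nv\sigma$ of $N$ inside $G$ to counting $2$-elements in the coset $Lu$ of $L$ inside $L\langle u\rangle \leq Aut(L)$, where $u=v_1 v_2 \cdots v_k$. The bridge is an explicit computation of $w^k$ for a generic $w=(n_1 v_1,\dots,n_k v_k)\sigma \in Nv\sigma$ inside the wreath product $Aut(L)\wr S_k$.

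First I would compute $w^k$ directly. By the standard wreath product identity, $w^k$ lies in the base $B$, and its $j$-th coordinate is the cyclic product $a_j a_{j+1}\cdots a_{j-1}$ (indices mod $k$), where $a_i=n_i v_i$. All $k$ coordinates of $w^k$ are pairwise conjugate in $Aut(L)$ via partial products of the $a_i$'s, so $w^k$ is a $2$-element in $B$ if and only if its first coordinate $c_1=n_1 v_1 n_2 v_2\cdots n_k v_k$ is a $2$-element in $Aut(L)$. Since $k=2^t$, this is in turn equivalent to $w$ being a $2$-element.

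Next I would straighten $c_1$: by commuting each $v_i$ past the later $n_j$'s at the cost of conjugating, setting $m_i := (v_1\cdots v_{i-1}) \, n_i \, (v_1\cdots v_{i-1})^{-1}\in L$ (so $m_1=n_1$), one obtains $c_1=(m_1 m_2\cdots m_k)\cdot u$. Let $m := m_1 m_2\cdots m_k\in L$. For any fixed choice of $n_2,\dots,n_k$ the assignment $n_1\mapsto m$ is a bijection $L\to L$, hence the map $(n_1,\dots,n_k)\mapsto m$ is a surjection $L^k\to L$ all of whose fibers have size $|L|^{k-1}$. Since $w$ is a $2$-element exactly when $mu$ is, this yields the exact identity
\[
|\mathfrak{U}_2(Nv\sigma)|=|L|^{k-1}\cdot |\mathfrak{U}_2(Lu)|.
\]

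It remains to lower bound $|\mathfrak{U}_2(Lu)|$. Since $v\sigma\in P\subseteq Q\wr R$, every coordinate $v_i$ lies in the $2$-group $Q$, so $u=v_1\cdots v_k\in Q$ is a $2$-element; moreover $P_0=Q\cap L\subseteq Q$, so for any $a\in P_0$ the element $au$ lies in $Q$ and is a $2$-element of $Aut(L)$. Every $L$-conjugate of $au$ lies in $Lu$ and is still a $2$-element, so $(au)^L\subseteq\mathfrak{U}_2(Lu)$, giving $|\mathfrak{U}_2(Lu)|\geq |L|/|C_L(au)|$. Combining with the identity above yields the desired bound. The only genuinely delicate step is the wreath product power computation with the correct conventions and the verification that the cyclic products $c_j$ are pairwise conjugate in $Aut(L)$; once that is in hand, everything else falls out.
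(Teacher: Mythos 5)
Your argument is correct, but it takes a genuinely different route from the paper. The paper's proof is shorter and more local: it considers the single element $g=\tilde{a}v\sigma$ with $\tilde{a}=(a,1,\dots,1)\in N$, notes that $g\in P$ (so $g$ and all of its $N$-conjugates are $2$-elements lying in $Nv\sigma$), and then computes $|g^N|=|N|/|C_N(g)|$ directly; the centralizer computation shows that a tuple $(x_1,\dots,x_k)\in C_N(g)$ is determined by its first coordinate, which must satisfy $x_1=x_1^{au}$, whence $|C_N(g)|=|C_L(au)|$ and the bound follows. You instead first prove the stronger exact identity $|\mathfrak{U}_2(Nv\sigma)|=|L|^{k-1}\,|\mathfrak{U}_2(Lu)|$ via the $k$-th power and straightening computation in the wreath product, and only then pass to the single class $(au)^L\subseteq\mathfrak{U}_2(Lu)$. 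The two proofs yield the same numerical bound (your fibration sends the paper's class $g^N$ precisely onto $(au)^L$), but your identity carries more information: it reduces the count in the coset $Nv\sigma$ exactly to a count in a coset of $L$ inside $L\langle u\rangle$, so a sharper lower bound on $|\mathfrak{U}_2(Lu)|$ (of the kind the paper uses elsewhere for the ratio $\phi(L)$) would immediately improve the estimate. The price is the wreath-product bookkeeping: you must fix the convention under which the coordinates of $w^k$ are the cyclic products $a_ja_{j+1}\cdots a_{j-1}$ and the straightened product comes out as $v_1\cdots v_k$ (any cyclic rotation is conjugate to $u$, so $|C_L(au)|$ is unaffected), and you correctly justify that $au$ is a $2$-element because $v\in Q^k$ forces $u\in Q$ and $a\in P_0=Q\cap L$. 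All of these checks go through, so the proposal stands as a valid alternative proof.
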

\begin{proof}
Let $a \in P_0$ and $\tilde{a}=(a,1, \dots ,1) \in N$. Since the direct product of $k$ copies of $P_0$ lies in $P$, we have that the element $g=\tilde{a}v\sigma \in P$. Moreover, for all $x \in N$ we have
\[
g^x=\tilde{a}^x(v\sigma)^x=\tilde{a}^x x^{-1}x^{(v\sigma)^{-1}}v\sigma \in Nv\sigma,
\]
and so $g^N \subseteq \mathfrak{U}_2(Nv\sigma)$.

The size of the $N$-orbit of $g$ is given by
\[
|g^N|=\dfrac{|N|}{|C_N(g)|}.
\]
Let $x = (x_1, \dots, x_k) \in N$. Then $x \in C_N(g)$ if and only if $x^{\tilde{a}v\sigma}=x,$
that is, if an only if
\[
(x_k^{v_k},x_1^{av_1}, \dots, x_{k-1}^{v_{k-1}})=(x_1, \dots, x_k),
\]
or, equivalently:
\[
x_1=x_k^{v_k}=(x_{k-1}^{v_{k-1}})^{v_k} = \dots = x_1^{av_1 \dots v_k}=x_1^{au}.
\]
For every choice of $x_1 \in C_L(au)$ the other components of $x$ are uniquely determined and so
\[
\left| C_N(g) \right| = \left| C_L(au) \right|.
\]
\end{proof}

\begin{cor} \label{coset intrecciato permutazione}
Let $1 \neq \sigma \in R$ and $v=(v_1, \dots , v_k) \in Q^k$ so that $v\sigma \in Q \wr R$. Let $s$ be the maximal size of an orbit of $\sigma$. Then 
\[
|\mathfrak{U}_2(Nv\sigma)| \geq \frac{|L|^s}{|C_L(g)|} \left| P_0 \right|^{k-s},
\]
where $g \in Aut(L)$. 
\end{cor}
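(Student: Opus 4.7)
The plan is to reduce the statement to Proposition \ref{coset intrecciato ciclo} applied to the longest cycle of $\sigma$, combined with Lemma \ref{Frob bischerate}(iii) applied to the remaining cycles. First, I would decompose $\sigma$ into its disjoint cycles $\sigma_1, \sigma_2, \ldots, \sigma_m$, where $\sigma_j$ has support $O_j \subseteq \{1,\dots,k\}$ of length $s_j$; reorder so that $s_1 = s$. Since $\sigma$ lies in the Sylow $2$-subgroup $R$ of $S_k$, each $s_j$ is a power of $2$. Regrouping the coordinates accordingly, set $N_{O_j} = \prod_{i \in O_j} L_i$ and write $v\sigma$ as a product of pairwise commuting elements $g_1 g_2 \cdots g_m$, where $g_j = v_{O_j}\sigma_j$ uses only the coordinates indexed by $O_j$. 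Each $g_j$ is a $2$-element, since the $g_j$'s act on disjoint coordinate blocks (so $|v\sigma| = \mathrm{lcm}(|g_j|)$ is a $2$-power), and moreover $v\sigma$ normalises every $N_{O_j}$.

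Applying Lemma \ref{Omega in coset prodotto diretto} to the direct product decomposition $N = \prod_j N_{O_j}$ yields
\[
|\mathfrak{U}_2(Nv\sigma)| \;=\; \prod_{j=1}^{m} |\mathfrak{U}_2(N_{O_j}\, g_j)|.
\]
For the longest orbit, after relabeling so that $O_1 = \{1,\dots,s\}$ and $\sigma_1 = (1,2,\dots,s)$, I would apply Proposition \ref{coset intrecciato ciclo} inside the subgroup $(L_1 \times \cdots \times L_s)\langle g_1 \rangle$; the hypothesis that the cycle length be a power of $2$ is satisfied, and the conclusion gives
\[
|\mathfrak{U}_2(N_{O_1}\, g_1)| \;\geq\; \frac{|L|^s}{|C_L(g)|}
\]
for an element $g \in \mathrm{Aut}(L)$ of the form $au$ from that proposition. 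For each smaller orbit, the group $N_{O_j}$ has a Sylow $2$-subgroup of order $|P_0|^{s_j}$, and $g_j$ is a $2$-element normalising it; Lemma \ref{Frob bischerate}(iii) then immediately gives $|\mathfrak{U}_2(N_{O_j}\, g_j) | \geq |P_0|^{s_j}$.

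Multiplying these estimates and using $\sum_{j \geq 2} s_j = k - s$ gives the desired inequality. I do not expect a real obstacle here; the only subtle bookkeeping is to check that the disjoint-cycle decomposition really factors $v\sigma$ into commuting $2$-elements (immediate from disjointness of supports and the fact that $v\sigma$ is itself a $2$-element of $Q \wr R$), and to confirm that the ``cycle length is a $2$-power'' hypothesis of Proposition \ref{coset intrecciato ciclo} is fulfilled precisely because orbits of elements of $R$ have $2$-power lengths.
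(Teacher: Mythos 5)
Your proposal is correct and follows essentially the same route as the paper: decompose $\sigma$ into disjoint cycles, factor the coset via Lemma \ref{Omega in coset prodotto diretto}, apply Proposition \ref{coset intrecciato ciclo} to the block of the longest cycle, and bound the remaining blocks by Lemma \ref{Frob bischerate}(iii). The only cosmetic difference is that the paper explicitly records the intermediate observation that an element of $N_{\mathcal{O}_1}v\sigma$ is a $2$-element if and only if its ``projection'' to $N_{\mathcal{O}_1}w_1\sigma_1$ is, which is the bookkeeping point you note at the end.
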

\begin{proof}
Let $\sigma=\sigma_1 \dots \sigma_m$ be the expression of $\sigma$ as a product of disjoint cycles (including those of size $1$) with decreasing sizes (so that $\sigma_1$ has size $s$) and set
\begin{equation*}
\mathcal{O}_i=Supp(\sigma_i), \ \ N_i=\prod_{j \in \mathcal{O}_i} L_j,\ \ \tau= \prod_{i>1} \sigma_i. 
\end{equation*}
Moreover, through the identification 
\[
Aut(L_j) \simeq 1 \times \dots \times Aut(L_j) \times \dots \times 1
\]
we write $v$ as $v_1 \dots v_k$ and set
\begin{equation*}
\begin{split}
w_i=\prod_{j \in \mathcal{O}_i} v_j, \\
w=\prod_{i>1} w_i.
\end{split}
\end{equation*}
Then $N$ is the direct product of the subgroups $N_i$ and $v\sigma$ normalizes every $N_i$. We can then apply Lemma \ref{Omega in coset prodotto diretto} and obtain
\begin{equation} \label{Ni vu sigma}
\left| \mathfrak{U}_2(Nv\sigma)\right| = \prod_{i=1}^m|\mathfrak{U}_2(N_iv\sigma)|.
\end{equation}
Since $\langle N_1,w_1,\sigma_1 \rangle$ and $\langle w,\tau \rangle$ commute, we have 
\[
v\sigma=w_1w\sigma_1 \tau=(w_1\sigma_1) (w\tau).
\]
Moreover, an element $a w_1 \sigma_1 w \tau \in N_1 w_1 \sigma_1 w \tau$ is a $2$-element if and only if $a(w_1\sigma_1)$ is such.
We can then apply Lemma \ref{coset intrecciato ciclo} and get
\[
|\mathfrak{U}_2(N_1v\sigma)| \geq \frac{|L|^s}{|C_L(g)|},
\]
for some $g \in Aut(L)$.
For the other terms of the product (\ref{Ni vu sigma}) we use part (iii) in Lemma \ref{Frob bischerate}. Finally we get
\[
\left| \mathfrak{U}_2(Nv\sigma)\right| = \frac{|L|^s}{|C_L(g)|} \prod_{i=2}^m \left| P_0 \right|^{\left| \mathcal{O}_i \right|}=\frac{|L|^s}{|C_L(g)|} \left| P_0 \right|^{k-s}.
\]
\end{proof}

We can now obtain a useful bound for $\mathcal{B}$. First of all, $P \cap N$ is a Sylow $2$-subgroup of $N$ and so its cardinality is $|P_0|^k$. Every element $g \in \mathcal{T}_{G} \setminus \mathcal{T}_{K}$ is the product of an element $v \in Q^k$ by an element $1 \neq\sigma \in S_k$, so we can use Corollary \ref{coset intrecciato permutazione}. For such an element $g=v\sigma$, set $s_g$ the maximal size of an orbit of $\sigma$.

Setting
\[
c=\max_{x \in Aut(L)} |C_L(x)|,
\]
we get
\begin{equation*}
\begin{split}
\mathcal{B}=& \dfrac{1}{\left| \frac{G}{N}\right| \left| P \cap N \right| } 
\left( \sum_{g \in \mathcal{T}_{G} \setminus \mathcal{T}_{K}} \left| \mathfrak{U}_2(Ng) \right| \right) \\ 
\geq & \dfrac{1}{\left| \frac{G}{N}\right| \left| P_0 \right|^k } 
\left( \sum_{g \in \mathcal{T}_{G} \setminus \mathcal{T}_{K}}
\frac{\left| L \right|^{s_g}}{c} \left| P_0 \right|^{k-s_g}  \right)  \\
\geq & \dfrac{1}{\left| \frac{G}{N}\right| \left|P_0 \right|^k } \cdot \frac{1}{c} \cdot \left( \sum_{g \in \mathcal{T}_{G} \setminus \mathcal{T}_{K}} \left( \frac{|L|}{|P_0|} \right)^{s_g} \left| P_0 \right|^{k} \right)  \\
\geq & \dfrac{\left| \mathcal{T}_{G} \setminus \mathcal{T}_{K}\right|}{\left| \frac{G}{N}\right|} \cdot \frac{1}{c} \cdot \dfrac{\left| L \right|^2}{\left|P_0 \right|^k }|P_0|^{k-2} \\
= & \dfrac{\left| G \right| - \left| K \right|}{\left| G \right|} \cdot \frac{\left| L \right|}{c} \cdot 
\frac{\left| L \right|}{|P_0|^{2}} 
\end{split}
\end{equation*}

The following theorem, whose proof relies on CFSG, ensures that the third factor of the last product is greater than $1$.

\begin{teo}\cite{lyons:sylow} Let $L$ be a nonabelian finite simple group, $p$ a prime dividing $\left| L \right|$ and $P \in Syl_p(L)$. Then $\left| P \right|^2 < \left| L \right|$.
\end{teo}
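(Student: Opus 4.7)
The plan is to invoke CFSG and verify the bound $|P|^2 < |L|$ for each family of nonabelian finite simple groups. In every case the orders of $L$ and of its Sylow $p$-subgroup are known in closed form, so the bound reduces to a concrete numerical inequality that can be checked from the structure of the family. The overarching strategy is thus a systematic case-check: alternating groups, classical groups of Lie type, exceptional groups of Lie type, and sporadic groups.

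For $A_n$ with $n \geq 5$, Legendre's formula gives $|P| = p^{s}$ with $s = \sum_{i \geq 1}\lfloor n/p^i \rfloor \leq (n-1)/(p-1)$, so $|P|^2 \leq 4^{n-1}$; this is dominated by $|A_n| = n!/2$ for all but small $n$ by Stirling, and the remaining cases are immediate. For classical or exceptional Lie type groups over $\mathbb{F}_q$ in the defining characteristic $p$, one has $|P| = q^N$ with $N$ the number of positive roots and $|L| = d^{-1} q^N \prod_{i=1}^{r}(q^{d_i} - \epsilon_i)$, where the $d_i$ are the Weyl group degrees (satisfying $\sum d_i = N + r$), $\epsilon_i = \pm 1$, and $d$ is the order of the center of the simply connected cover; the target inequality becomes $d\,q^N < \prod_i(q^{d_i} - \epsilon_i)$, which for each type follows by tabulating the degrees and signs, with only a short list of small-$q$, low-rank exceptions to verify directly. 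In non-defining characteristic, the Sylow $p$-subgroup of $L$ embeds in a maximal torus associated to the cyclotomic factor $\Phi_e(q)$ with $e = \mathrm{ord}_p(q)$, and a direct comparison with the remaining cyclotomic factors of $|L|$ yields the bound. For the $26$ sporadic groups, $|P|^2 < |L|$ is read off from the ATLAS.

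The main obstacle is the defining-characteristic case for classical groups over $\mathbb{F}_2$ and $\mathbb{F}_3$ in small rank: here the factor $q^r/d$ in $|L|/|P|^2$ is modest, so the inequality, while still holding, is comparatively tight and one has to enumerate the degrees $d_i$ explicitly for each type ($A_r$, ${}^2A_r$, $B_r$, $C_r$, $D_r$, ${}^2D_r$, ${}^3D_4$ and the various exceptional types) and verify the bound type by type. Outside these boundary cases the estimate is essentially automatic, and the overall proof is a routine CFSG-based verification.
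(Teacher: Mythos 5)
Note first that the paper does not prove this statement: it is imported verbatim from the reference \cite{lyons:sylow}, where it is established exactly along the lines you propose, by a CFSG-driven inspection of the families of simple groups. So there is no internal proof to compare against, and your overall strategy --- alternating groups, Lie type in defining and in non-defining characteristic, sporadics --- is the right one and is essentially the one used in the cited source. Your reductions for $A_n$ and for the defining-characteristic case are sound: with $|P|=q^N$ and $|L|=d^{-1}q^N\prod_i(q^{d_i}-\epsilon_i)$ the claim is indeed equivalent to $d\,q^N<\prod_i(q^{d_i}-\epsilon_i)$, and since $\sum_i d_i=N+r$ this follows from $q^r\prod_i(1-q^{-d_i})>d$, which only requires explicit care for $q\in\lbrace 2,3\rbrace$ and small rank, as you say.

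The one step that is genuinely wrong as written is the cross-characteristic case: a Sylow $p$-subgroup of $L$ does not in general embed in a maximal torus. Tori are abelian, while for $p$ dividing the order of the Weyl group the Sylow $p$-subgroup picks up a nonabelian contribution from the normalizer of the torus; already $L=PSL_2(9)$ with $p=2$ has dihedral Sylow $2$-subgroups of order $8$, whereas its maximal tori have order at most $5$. The correct structural statement is that $P$ lies in the normalizer of a Sylow $\Phi_e$-torus, not in the torus itself. Fortunately your argument does not actually need any structural information about $P$: the inequality $|P|^2<|L|$ is equivalent to $|L|_p<|L|_{p'}$, and in cross characteristic $|L|_{p'}$ is divisible by $q^N$ while $|L|_p$ divides the $p$-part of $\prod_i(q^{d_i}-\epsilon_i)$, which can be bounded directly from the cyclotomic factorization without mentioning tori at all. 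Replacing the embedding claim by this purely arithmetic comparison closes the gap, and the remaining verification is the routine case-check you describe.
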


As for the factor $|L|/c$, we observe that a nonabelian simple group has a proper subgroup of index smaller then $6$ if and only if it is isomorphic to $A_5$. Therefore, if $L \neq A_5$, $|L|/c \geq 6$.
For $L=A_5$, we can calculate $c$ directly to see that $|L|/c=10$. Finally we obtain the bound
\begin{equation} \label{bound for B}
\mathcal{B} \geq 6 \dfrac{\left| G \right| - \left| K \right|}{\left| G \right|}.
\end{equation}

\subsection{A bound for $\mathcal{A}$}

Since $N \leq K$, we have that $|\mathfrak{U}_2(K)| \geq |\mathfrak{U}_2(N)|= |\mathfrak{U}_2(L)|^k$. If $\hat{P}$ is a Sylow $2$-subgroup of $Aut(L)$,	 we get $|P \cap K| \leq |\hat{P}|^k$ and so
\begin{equation} \label{frob auto k}
\mathcal{A}= \dfrac{|\mathfrak{U}_2(K)|}{\left| \frac{ G}{K }\right|  \left| P \cap K \right| } \geq \dfrac{|\mathfrak{U}_2(L)|^k}{\left| \frac{ G}{K }\right| | \hat{P} |^k } = 
\dfrac{|K|}{|G|} \left( \dfrac{|\mathfrak{U}_2(L)|}{|\hat{P}|} \right)^k.
\end{equation}

We will show that this bound is enough for proving that $\mathcal{A} \geq 6|K|/|G|$, which, together with (\ref{bound for B}), will complete the proof of Proposition \ref{due un sesto}. For every $L$ nonabelian simple group, set
\[
\phi \left( L \right) = \frac{\left| \mathfrak{U}_2(L) \right|}{| \hat{P}|}.
\]
We will show that this ratio is strictly greater than 5 for every $L$ nonabelian simple group, except for the cases treated in Lemma \ref{frob A5}.

Again we consider the different cases separately.

\subsubsection{Alternating groups}

Let $L = A_n$, $n \geq 6$ and let $n=2^{m_1}+2^{m_2}+ \dots + 2^{m_l}$, with $m_1 > m_2 > \dots > m_l \geq 0$. Set $n_i=\sum_{j=1}^i 2^{m_j}$ for $i \in \lbrace 1, \dots, l \rbrace$. Take 
\[
s=( 1,2, \dots, n_1 ) \dots \big(n_{l-1}+1, \dots , n \big),
\]
if this is an element of $A_n$, or
\[
s=( 1,2, \dots, n_1 ) \dots \big(n_{l-1}+1, \dots , n_{l-1}+2^{m_l-1} ) (n_{l-1}+2^{m_l-1}+1, \dots n),
\]
otherwise.

The conjugacy class of $s$ in $S_n$ has at least
\[
|s^L| \geq \dfrac{n!}{2 \cdot 2^{m_1} \cdots 2^{2m_l}} = \dfrac{n!}{2^{(\sum_{i=1}^lm_i) + m_l+1}}
\]
elements.
Since $m_l \leq \left \lfloor \log_2(n) \right \rfloor$, we get that 
\[
\sum_{i=1}^l m_j \leq \sum_{i=1}^{\left \lfloor \log_2(n) \right \rfloor}i \leq \log_2^2(n)+\log_2(n),
\]
and so
\[
|s^L| \geq \dfrac{n!}{2^{\frac{1}{2}(\log_2^2(n)+3\log_2(n))+1}}.
\]
The order of a Sylow $2$-subgroup of $L$ is
\[
|P_0|=\frac{1}{2} 2^{2^{m_1}-1} \cdots 2^{2^{m_l}-1}=\frac{2^n}{2^{l+1}} \leq 2^{n-1}.
\]
Therefore, for $n>6$ we have
\[
\phi(L) > \frac{\left| s^L \right|}{2|P_0|} \geq \dfrac{n!}{2^{\frac{1}{2}(\log_2^2(n)+3\log_2(n))+n+1}}.
\]
The last term of this inequality is an increasing function in $n$. Since $\phi(A_{12})>6$, we have that $\phi(A_n) \geq 6$ for $n \geq 12$, while, for $6 \leq n < 12$, we can calculate $\phi(A_n)$ directly to get the desired bound. 

We observe that when $L \simeq A_5$, we have $\phi(L)=2$. The bound (\ref{frob auto k}) fails when $k=1,2$, and this is the reason why these cases were treated separately in (\ref{frob A5}).

\subsubsection{Groups of Lie type in odd characteristic}

As we have just seen, for alternating groups it is enough to bound $|\mathfrak{U}_2(L)|$ with the size of a single conjugacy class to get the desired bound. We do the same thing with groups of Lie type in odd characteristic.

We use the results on $2$-regular elements contained in \cite{guralnick:surjwordmaps}, where the authors find explicit bounds for the size of the conjugacy classes of elements (Lemma 7.5 and following). Namely, for every class of groups of Lie type in odd characteristic, the authors are able to find a regular $2$-element $s$ and to give an upper bound for the order of its centralizer. Clearly, this gives a lower bound for the order of the conjugacy class $s^L$.

We then use the following inequality:
\[
\phi(L) \geq \frac{\left| s^L \right|}{|Out(L)|_2 \cdot |L|_2}.
\]
The aforementioned results contained in \cite{guralnick:surjwordmaps}, together with some easy estimates on the $2$-parts of $|L|$ and $|Out(L)|$, give the desired bound for all but a finite number of simple groups, which can be checked directly using GAP.

\subsubsection{Groups of Lie type in characteristic 2}

A theorem of Steinberg gives an exact result in this case for the ratio we are interested in.

\begin{teo}(15.2 in \cite{steinberg:endom}).
Let $G$ be a connected reductive algebraic group over a field of characteristic $p$ and let $F$ be a Frobenius map. Then 
\[
|\mathfrak{U}_p(G^F)| = (|G^F|_p)^2.
\]
\end{teo}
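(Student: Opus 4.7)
The plan is to translate the statement into a count on the unipotent subvariety of $G$ and invoke the structure theory of reductive groups over finite fields. In characteristic $p$, an element of the algebraic group $G$ has $p$-power order if and only if it is unipotent, so writing $\mathcal{U} \subseteq G$ for the closed unipotent subvariety one has $\mathfrak{U}_p(G^F) = \mathcal{U}^F$. The standard order formula for finite groups of Lie type gives $|G^F|_p = |U^F| = q^N$, where $U$ is the unipotent radical of an $F$-stable Borel, $N = \#\Phi^+$ is the number of positive roots, and $q$ is the level of the Frobenius. The theorem thus reduces to
\[
|\mathcal{U}^F| = q^{2N},
\]
which is natural since $\mathcal{U}$ is known to be irreducible of dimension $\dim G - \operatorname{rank} G = 2N$.

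To extract this count exactly I would work with the Springer resolution $\pi \colon \widetilde{\mathcal{U}} := G \times^B U \to \mathcal{U}$, a proper birational $G$-equivariant morphism from a smooth variety. The source is a rank-$N$ vector bundle over the flag variety $\mathcal{B} = G/B$, so Bruhat decomposition furnishes an $F$-equivariant affine paving and yields $|\widetilde{\mathcal{U}}^F| = |\mathcal{B}^F| \cdot q^N$ immediately. One then relates this to $|\mathcal{U}^F|$ by double counting the incidence set
\[
\mathcal{Z} = \{(u,B') \in \mathcal{U}^F \times \mathcal{B}^F : u \in B'\},
\]
which simultaneously equals $|\mathcal{B}^F|\cdot q^N$ (summing over Borels and using that each $F$-stable Borel contains exactly $q^N$ $F$-fixed unipotents) and $\sum_{u \in \mathcal{U}^F} |\mathcal{B}_u^F|$ (summing over unipotents, with $\mathcal{B}_u$ the Springer fiber).

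The main obstacle is converting these identities into the sharp equality $|\mathcal{U}^F| = q^{2N}$, which requires controlling the $F$-fixed-point counts of Springer fibers so that the transfer from $\widetilde{\mathcal{U}}$ down to $\mathcal{U}$ is exact and not just asymptotic. Steinberg handles this by a direct argument built around regular unipotent elements, which form a single dense conjugacy class of centralizer order $q^{\operatorname{rank} G}\cdot |Z(G)^F|$ and therefore already account for the bulk of $|\mathcal{U}^F|$; the non-regular classes are then recovered by induction on Levi subgroups via explicit centralizer order formulas. A more modern alternative is to invoke the Grothendieck--Lefschetz trace formula, using that $\mathcal{U}$ is rationally smooth with $\ell$-adic cohomology concentrated in even degrees and Frobenius acting by the expected powers of $q$. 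Either way, combining the count $|\mathcal{U}^F| = q^{2N}$ with $|G^F|_p = q^N$ gives $|\mathfrak{U}_p(G^F)| = (|G^F|_p)^2$.
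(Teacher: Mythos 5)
This statement is not proved in the paper at all: it is imported verbatim as Theorem 15.2 of Steinberg's memoir \emph{Endomorphisms of linear algebraic groups} and used as a black box, so there is no in-paper argument to compare your proposal against. Judged on its own terms, your write-up is a correct \emph{framing} of the problem but not a proof. The reductions you make are all sound: in characteristic $p$ the $p$-elements of $G^F$ are exactly the $F$-fixed unipotent elements, $|G^F|_p=|U^F|=q^N$ for $U$ the unipotent radical of an $F$-stable Borel, and the double count of the incidence set $\mathcal{Z}$ correctly gives $\sum_{u\in\mathcal{U}^F}|\mathcal{B}_u^F|=|\mathcal{B}^F|\,q^N$. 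But none of this yields $|\mathcal{U}^F|=q^{2N}$, and you say so yourself: the ``main obstacle'' of controlling $|\mathcal{B}_u^F|$ (equivalently, of turning the birationality of the Springer resolution into an exact point count) is precisely where the theorem lives, and you resolve it only by writing ``Steinberg handles this by\dots'' or by invoking a Lefschetz trace formula argument whose inputs (evenness/purity of the cohomology of the unipotent variety, Frobenius acting by integral powers of $q$) are themselves theorems of comparable depth that you do not establish. Citing the result to be proved is not a proof of it.

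Two of the supporting claims in your closing paragraph are also shaky. The regular unipotent elements do account for a proportion $\prod_i(1-q^{-d_i})$ of $q^{2N}$, but the proposed recovery of the non-regular classes ``by induction on Levi subgroups via explicit centralizer order formulas'' does not work as stated: centralizers of unipotent elements are not Levi subgroups, and their orders are known only through a long case-by-case classification, so this cannot serve as a uniform inductive argument. If you want a self-contained route of roughly the length you are aiming for, the standard one is via the Steinberg character $\mathrm{St}$: it vanishes off semisimple classes and satisfies $\mathrm{St}(s)=\pm|C_G^{\circ}(s)^F|_p$, so $\langle\mathrm{St},\mathrm{St}\rangle=1$ gives $\sum_{s\ \mathrm{ss}}|C_G^{\circ}(s)^F|_p^2=|G^F|$, while the Jordan decomposition gives $\sum_{s\ \mathrm{ss}}|\mathfrak{U}_p(C_G^{\circ}(s)^F)|=|G^F|$; comparing the two sums by induction on $\dim G$ forces $|\mathfrak{U}_p(G^F)|=(|G^F|_p)^2$. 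For the purposes of this paper, however, the citation to Steinberg is the intended ``proof,'' and your sketch, while pointing in reasonable directions, should not be presented as a replacement for it.
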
 

Since each finite quasisimple group of Lie type $L$ is the group of fixed points of a certain Frobenius map $F$ on a connected reductive group $G$, using Steinberg's theorem together with part (ii) of Lemma \ref{Frob bischerate}, we get
\[
\phi(L)=\dfrac{\left| \mathfrak{U}_2(L) \right|}{\left|Out(L) \right|_2 \left| P_0 \right| } = \dfrac{\left| \mathfrak{U}_2(G^F) \right|}{\left|Out(L) \right|_2 \left| G^F \right|_2 } = \dfrac{|G^F|_2}{\left| Out(L) \right|_2}.
\]

Some easy estimates on the order of a Sylow $2$-subgroups of the groups $G^F$ and on $|Out(L)|_2$ give the desired bound.

\subsubsection{Sporadic groups}

The CTblLib library of GAP (\cite{GAP4}) contains the conjugacy class sizes of every sporadic group. One can thus compute the exact value of $\phi(L)$ for $L$ sporadic and see that this is always greater than $6$. 

\subsection*{ }

If $N$ is not one of the groups in Lemma \ref{frob A5}, then we have
\[
\frac{|\mathfrak{U}_2(G)|}{|G|_2} \geq \mathcal{A} + \mathcal{B} \geq 6 \frac{|K|}{|G|} + 6 \frac{|G|-|K|}{|G|} =6
\]
This completes the proof of Proposition \ref{due un sesto} and so that of Theorem \ref{unsesto} too.

\subsection*{Acknowledgements} 

This article is part of the author’s PhD thesis which was written under the great supervision of Prof. Carlo Casolo, whose contribution to this work was essential. 

The work was also in part developed during a visit to Rutgers University which led to fortunate encounters with Robert Guralnick, Richard Lyons and Pham Huu Tiep, who all deserve thanks, together with Silvio Dolfi, who suggested and facilitated the visit.

Thanks are also due to Francesco Fumagalli for his valuable comments and suggestions.

This work was partially funded by the Istituto Nazionale di Alta Matematica ``Francesco Severi" (Indam).


\begin{thebibliography}{99} 

\bibitem{burness:fprclassicalII} T.C. Burness, \emph{Fixed point ratios in actions of finite classical groups. II}, J. Algebra  309 (2007), no. 1.

\bibitem{burness:surveyfpr} T.C. Burness, \emph{Simple groups, fixed point ratios and applications}, EMS Ser. Lect. Math., Eur. Math. Soc., Zürich, 2018. 

\bibitem{casolo:subnor} C. Casolo, \emph{On the subnormalizer of a $p$-subgroup}, J. Pure Appl. Algebra	77 (1992), no.3.

\bibitem{GAP4} The GAP~Group, \emph{GAP -- Groups, Algorithms, and Programming, Version 4.11.0}; 2020, \url{https://www.gap-system.org}.

\bibitem{gheri:degnil} P. Gheri, \emph{Subnormalizers and the degree of nilpotence in finite groups}, Proc. Amer. Math. Soc., DOI: https://doi.org/10.1090/proc/15080 (to appear in print).

\bibitem{gheri:pelem} P. Gheri, \emph{On the number of $p$-elements in a finite group}, preprint, arXiv:2007.00967v1 [math.GR], (2020).

\bibitem{gluckmagaard:charfprcclas} D. Gluck and K. Magaard, \emph{Character and fixed point ratios in finite classical groups},  Proc. London Math. Soc. (3) 71 (1995), no. 3.

\bibitem{gonshaw:unipclas} S. Gonshaw, M.W. Liebeck and E.A. O'Brien, \emph{Unipotent class representatives for finite classical groups}, J. Group Theory 20 (2017), no. 3.

\bibitem{guralnick:solvable} R.M. Guralnick and J.S. Wilson, \emph{The probability of generating a finite soluble group}, Proc. London Math. Soc. (3) 81 (2000), no.2.

\bibitem{guralnick:surjwordmaps} R.M Guralnick, M.W. Liebeck, E.A. O'Brien, A. Shalev and P.H. Tiep, \emph{Surjective word maps and Burnside's $p^aq^b$ theorem},  Invent. Math. 213 (2018), no. 2.

\bibitem{gustafson:degcom} W.H. Gustafson, \emph{What is the probability that two group elements commute?}, Amer. Math. Monthly 80 (1993).

\bibitem{lyons:sylow} W. Kimmerle, R. Lyons, R. Sandling and D.N. Teague, \emph{Composition factors from the group ring and Artin's theorem on orders of simple groups}, Proc. London Math. Soc. (3) 60 (1990).

\bibitem{kleidmanliebeck:sbgpstrctclas} P.B. Kleidman and M.W. Liebeck, \emph{The subgroup structure of the finite classical groups},
London Mathematical Society Lecture Note Series, 129. Cambridge University Press, Cambridge, 1990.

\bibitem{lawther:fprexceptional} R. Lawther, M.W. Liebeck and G.M. Seitz, \emph{Fixed point ratios in actions of finite exceptional groups of Lie type},
Pacific J. Math. 205 (2002), no. 2.

\bibitem{liebecksaxl:mindegprimperm} M.W. Liebeck and J. Saxle \emph{Minimal degrees of primitive permutation groups, with an application to monodromy groups of covers of Riemann surfaces},
Proc. London Math. Soc. (3) 63 (1991), no. 2.

\bibitem{liebeckshalev:simplegpspermgps} M.W. Liebeck and A. Shalev, \emph{Simple groups, permutation groups, and probability},
J. Amer. Math. Soc. 12 (1999), no. 2.

\bibitem{steinberg:endom} R. Steinberg, \emph{Endomorphisms of linear algebraic groups}, Memoirs of the American Mathematical Society, No. 80 American Mathematical Society, Providence, R.I. 1968.

\end{thebibliography}
\end{document}